\def\relbd{\mathop\mathrm{relbd}\nolimits}
\def\relint{\mathop\mathrm{relint}\nolimits}
\def\bd{\mathop\mathrm{bd}\nolimits}
\def\conv{\mathop\mathrm{conv}\nolimits}
\def\aff{\mathop\mathrm{aff}\nolimits}
\def\lin{\mathop\mathrm{lin}\nolimits}
\def\abs#1{\left|#1\right|}
\def\R{\mathbb{R}}
\def\N{\mathbb{N}}
\def\K{\mathcal{K}}
\def\L{\mathcal{L}}
\def\E{\mathcal{E}}
\def\e{\mathrm{e}}
\def\cir{\mathrm{R}}
\def\inr{\mathrm{r}}
\newcommand{\D}{\mathrm{D}}
\def\Simp{\mathrm{S}}
\def\Cube{\mathrm{C}}
\def\Ball{\mathrm{B}}
\newtheorem{theorem}{Theorem}[section]
\newtheorem{lemma}{Lemma}[section]
\newtheorem{corollary}{Corollary}[section]
\newtheorem{proposition}{Proposition}[section]
\newtheorem{remark}{Remark}[section]
\numberwithin{equation}{section}
\begin{document}

\title[Improved Perel'man-Pukhov quotient]{Improving bounds for the Perel'man-Pukhov quotient for
inner and outer radii}

\author{Bernardo Gonz\'{a}lez Merino}
\address{Zentrum Mathematik, Technische Universit\"at M\"unchen, Boltzmannstr. 3, 85747 Garching bei M\"unchen, Germany}
\email{bg.merino@tum.de}

\subjclass[2010]{Primary 52A20, Secondary 52A40} \keywords{Inner and
outer radii, Perel'man-Pukhov inequality, Section and projection, Jung's inequality, Steinhagen's inequality}

\thanks{The author is partially supported by
Consejer\'ia de Industria,
Turismo, Empresa e Innovaci\'on de la CARM through Fundaci\'on
S\'eneca, Agencia de Ciencia y Tecnolog\'ia de la Regi\'on de
Murcia, Programa de Formaci\'on Postdoctoral de Personal
Investigador 19769/PD/15
and project 19901/GERM/15, Programme in Support of Excellence
Groups of the Regi\'{o}n de Murcia, and by MINECO project reference MTM2015-63699-P, Spain.}


\date{\today}
\begin{abstract}
In this work we study upper bounds for the ratio of
successive inner and outer radii of a convex
body $K$. This problem was studied by Perel'man
and Pukhov and it is a natural generalization of the classical
results of Jung and Steinhagen.
We also introduce a technique which relates sections and
projections of a convex body in an optimal way.
\end{abstract}
\maketitle

\section{Introduction}

The biggest radius of an $i$-dimensional Euclidean disc contained in
an $n$-dimensional convex body $K$ is denoted by $\inr_i(K)$,
whereas the smallest radius of a solid cylinder with $i$-dimensional
spherical cross-section containing $K$ is denoted by $\cir_i(K)$, for any $1\leq i\leq n$.
Perel'man in \cite{P} and independently Pukhov in \cite{Pu} studied
the relation between these inner and outer measures, and showed that
\begin{equation}\label{eq:PerPukh}
\frac{\cir_{n-i+1}(K)}{\inr_i(K)}\leq i+1,\quad 1\leq i\leq n.
\end{equation}
Unfortunately, the inequality is far from being best possible.
Two remarkable results in Convex Geometry are particular cases
of \eqref{eq:PerPukh}. Jung's inquality \cite{Ju} states
\begin{equation}\label{eq:Jung}
\frac{\cir_n(K)}{\inr_1(K)}\leq\sqrt{\frac{2n}{n+1}},
\end{equation}
and Steinhagen's inequality \cite{St} says
\begin{equation}\label{eq:Steinhagen}
\frac{\cir_1(K)}{\inr_n(K)}\leq\left\{\begin{array}{ll}\sqrt{n} & \text{if }n\text{ is odd,}\\
\frac{n+1}{\sqrt{n+2}} & \text{if }n\text{ is even.}
\end{array}\right.
\end{equation}
\eqref{eq:Jung} and \eqref{eq:Steinhagen} are best possible, since
the $n$-dimensional regular simplex $\Simp_n$ attains equality in both of them.
Therefore, it is natural
to conjecture that the regular simplex attains equality in the optimal upper
bound for the quotient given in \eqref{eq:PerPukh}.
If $i=1$ or $i=n$ the simplex $\Simp_n$ attains equality in
\eqref{eq:Jung} and \eqref{eq:Steinhagen}. If $i=2$ and $n$ is even, then
\[
\frac{\cir_{n-1}(\Simp_n)}{\inr_2(\Simp_n)}=\frac{(2n-1)\sqrt{3}}{\sqrt{2n(n+1)}},
\]
and in the remaining cases (c.f.~\cite{Br05}) it holds that
\[
\frac{\cir_{n-i+1}(\Simp_n)}{\inr_i(\Simp_n)}=\sqrt{1-\frac{i}{n+1}}\sqrt{i(i+1)}.
\]

In \cite{BH} the authors proved the reverse inequality
$\inr_i(K)\leq\cir_{n-i+1}(K)$, with equality for the Euclidean ball,
and moreover, Perel'man pointed out in \cite{P} that there exists
no constant $C>0$ fulfilling $\cir_{j}(K)\leq C\inr_i(K)$, for any
$1\leq i\leq n-1$ and $1\leq j\leq n-i$.

Perel'man improved \eqref{eq:PerPukh} when $n=3$ and $i=2$, by reducing
the bound $3$ down to $2.151$. The proof of the result, far from being trivial,
shows up hard to understand. In Section \ref{s:4}, we will give a comprehensive proof
of this inequality, as it has some interest by itself. The proof will also suggest
what kind of results would be desirable to be proven, in order to obtain
further improvements of this and other bounds.

Both proofs of \eqref{eq:PerPukh} in \cite{P,Pu} contain the hidden
result that for a simplex $\Simp_n\subset K$
of maximum volume in an $n$-dimensional convex body $K$, it holds
$\Simp_n\subset K\subset x+(n+2)\Simp_n$, where $x$ is the barycenter of $\Simp_n$. This directly
bounds the so-called Banach-Mazur distance (c.f.~\cite{Sch}) between $K$
and the class of simplices by $n+2$. This fact
has been independently proved in \cite{La}.

If $K$ is assumed to be a centrally symmetric set, Pukhov \cite{Pu} (see also
\cite{BoHe}) improved the inequality \eqref{eq:PerPukh}, by showing that
\begin{equation}\label{eq:PerPukh0sym}
\frac{\cir_{n-i+1}(K)}{\inr_i(K)}\leq\sqrt{\e}\min\{\sqrt{i},\sqrt{n-i+1}\},\quad 1\leq i\leq n,
\end{equation}
and it is neither best possible. In \eqref{eq:PerPukh0sym} $\e$ means the base of the natural logarithm.
In \cite{Go}, we improved the upper
bound when $n=3$ and $i=2$, from $\sqrt{2\e}$ down to $2$, but this inequality is still not best possible.
Indeed, it is conjectured that the $n$-dimensional cube
$\Cube_n$ and the regular crosspolytope $\Cube_n^\circ$
provide the biggest ratio in the inequality \eqref{eq:PerPukh0sym}. They fulfill
\begin{equation}
\frac{\cir_{n-i+1}(\Cube_n)}{\inr_i(\Cube_n)}=
\frac{\cir_{n-i+1}(\Cube_n^\circ)}{\inr_i(\Cube_n^\circ)}=
\sqrt{\frac{(n-i+1)i}{n}},\quad 1\leq i\leq n,
\end{equation}
(see \cite{Br05} and \cite{ESVW}).
Our first theorem, which follows from the main result in
Section \ref{s:2}, improves \eqref{eq:PerPukh0sym} in the 3-dimensional
case.
\begin{theorem}\label{th:0symm3dim}
For any centrally symmetric convex body $K\subset\R^3$, it holds that
\[
\frac{\cir_2(K)}{\inr_2(K)}\leq \frac{2\sqrt{2}}{\sqrt{3}}<1.633.
\]
\end{theorem}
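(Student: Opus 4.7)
The plan is to reduce the theorem to a section--projection comparison---the main result of Section~\ref{s:2}---applied to a plane witnessing $\inr_2(K)$. Normalize so that $\inr_2(K) = 1$. By central symmetry there is a $2$-plane $H_0$ through the origin such that $K \cap H_0$ contains a Euclidean disc $D$ of radius $1$ centered at the origin; fix coordinates so that $H_0 = \{x_3 = 0\}$ and $D$ is the unit disc in $H_0$. Choosing the cylinder axis normal to $H_0$ yields the trivial upper bound
\[
\cir_2(K) \le \cir(K|H_0) = \max_{v \in H_0,\, |v|=1} h_K(v),
\]
so it suffices to prove $\cir(K|H_0) \le 2\sqrt{2}/\sqrt{3}$. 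Note that $K|H_0$ is a $0$-symmetric planar body containing $K \cap H_0 \supset D$, so the remaining task is a $2$-dimensional comparison between the circumradius of the projection $K|H_0$ and the inradius $1$ of the section $K \cap H_0$.

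The core step is then to invoke the section--projection inequality of Section~\ref{s:2}: applied with $n = 3$ and the $2$-plane $H_0$, it gives
\[
\cir(K|H_0) \le \frac{2\sqrt{2}}{\sqrt{3}}\,\inr(K \cap H_0) = \frac{2\sqrt{2}}{\sqrt{3}},
\]
which, combined with the trivial bound above, completes the argument. The principal obstacle is therefore that section--projection inequality itself, not the reduction. A natural approach to it is by contradiction: if some $p \in K$ had orthogonal projection onto $H_0$ of length exceeding $2\sqrt{2}/\sqrt{3}$, then by central symmetry $\pm p \in K$ and by convexity $K \supset \conv(D \cup \{\pm p\})$; one would need to exhibit a $2$-disc of radius strictly greater than $1$ inside this bipyramid, for instance by tilting $H_0$ toward $p$ along the direction of its projection onto $H_0$, contradicting $\inr_2(K) = 1$. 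The constant $\sqrt{8/3} = 2\sqrt{2}/\sqrt{3}$ coincides with the circumradius of the hexagonal shadow of the unit cube $[-1,1]^3$, suggesting that cube-type configurations govern the extremal case and that the $\sqrt{2}$ gap to the conjectural sharp constant $\sqrt{4/3}$ is the slack in this Jung-type planar argument.
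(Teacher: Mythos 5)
The reduction you propose does not work: the inequality you call ``the section--projection inequality of Section~\ref{s:2}'' is not what is proved there, and the statement you actually need is false. Theorem~\ref{th:inner_2} bounds $\widetilde{\inr}_2(K)/\inr_2(K)\leq 2/\sqrt{3}$, i.e.\ it starts from a large disc in some \emph{projection} $K|L$ and, via the hexagon-lifting Lemma~\ref{l:2-dim hexagon}, produces a large disc in a planar \emph{section} of $K$ lying in a generally different plane $L'$; it says nothing about the \emph{circumradius} of the projection onto the plane $H_0$ that realizes $\inr_2(K)$. Your claim $\cir(K|H_0)\leq(2\sqrt{2}/\sqrt{3})\,\inr(K\cap H_0)$ fails badly: let $D$ be the unit disc in $H_0=\e_3^{\bot}$, let $p=(R,0,\varepsilon)^{\intercal}$ with $R$ large and $\varepsilon>0$ small, and put $K_{\varepsilon}=\conv(D\cup\{\pm p\})\in\K^3_0$. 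A direct computation gives $K_{\varepsilon}\cap H_0=D$, so $\inr(K_{\varepsilon}\cap H_0)=1$; as $\varepsilon\to 0$ the body converges to the planar set $\conv(D\cup\{\pm R\e_1\})$, whose inradius is exactly $1$, so $\inr_2(K_{\varepsilon})\to 1$ and $H_0$ is (asymptotically) the optimal section plane; yet $\cir(K_{\varepsilon}|H_0)\geq R$ is arbitrarily large. The same example destroys your proposed contradiction argument: tilting $H_0$ toward $p$ gains nothing when $p$ lies almost in $H_0$, so a point of $K$ with a very long shadow on $H_0$ does \emph{not} force a $2$-section of inradius greater than $1$. (The theorem itself is not contradicted: $\cir_2(K_{\varepsilon})$ is attained by projecting onto $\lin(\{\e_2,\e_3\})$ and is close to $1$.)

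The paper's proof never ties the circumradius and the inradius to the same plane. It factors $\cir_2(K)/\inr_2(K)=\bigl(\cir_2(K)/\widetilde{\inr}_2(K)\bigr)\bigl(\widetilde{\inr}_2(K)/\inr_2(K)\bigr)$, bounds the first quotient by $\sqrt{2}$ using \eqref{eq:R_itildeinr_i} (Theorem 1.3 of \cite{Go}, which selects a projection plane adapted to $\widetilde{\inr}_2$, not to $\inr_2$), and bounds the second by $2/\sqrt{3}$ with Theorem~\ref{th:inner_2}. Any repair of your argument has to replace the false planar claim by these two genuinely three-dimensional steps; the constant $2/\sqrt{3}$ is the ratio of the inradius of a disc to that of an inscribed regular hexagon, which is exactly what the lifting lemma delivers.
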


In Section \ref{s:3}, we improve inequality \eqref{eq:PerPukh}
in some cases. Based on some ideas of Perel'man, we are able to show the following
theorem.
\begin{theorem}\label{th:Rn-1r2}
For any convex body $K\subset\R^n$, it holds that
\begin{equation}
\frac{\cir_{n-1}(K)}{\inr_2(K)}\leq 2\sqrt{2}\sqrt{\frac{n-1}{n}}.
\end{equation}
\end{theorem}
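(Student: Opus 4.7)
The plan is to apply Jung's inequality \eqref{eq:Jung} to an $(n-1)$-dimensional projection of $K$, reducing the theorem to a diameter bound on some projection of $K$, and then to establish this diameter bound by analyzing a maximum inscribed 2-disc, as in Perel'man's approach to the 3-dimensional case.

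First, for any $v \in \S^{n-1}$, the projection $K|v^\perp$ is an $(n-1)$-dimensional convex body, and \eqref{eq:Jung} applied in dimension $n-1$ gives
\[
R(K|v^\perp) \leq \sqrt{\frac{n-1}{2n}}\,D(K|v^\perp),
\]
where $R$ and $D$ denote the circumradius and diameter in $(n-1)$ dimensions. Since $\cir_{n-1}(K) = \min_{v \in \S^{n-1}} R(K|v^\perp)$ and $D(K|v^\perp) = \max_{u\perp v,\,|u|=1}w(K,u)$, with $w(K,u)$ the width of $K$ in direction $u$, the theorem reduces to exhibiting a direction $v \in \S^{n-1}$ with
\[
\max_{u\perp v}w(K,u) \leq 4\,\inr_2(K).
\]
Combining such a bound with the Jung estimate gives exactly $\cir_{n-1}(K) \leq 4\sqrt{(n-1)/(2n)}\,\inr_2(K) = 2\sqrt{2}\sqrt{(n-1)/n}\,\inr_2(K)$.

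To produce such a direction $v$, I follow Perel'man's idea using a maximum inscribed 2-disc. Let $r = \inr_2(K)$ and let $D \subset K$ be a 2-disc of radius $r$ in a 2-plane $P$ centered at $c$. By the maximality of $D$ among all 2-discs inscribed in $K$, there are contact points $p_j \in \partial D \cap \partial K$ and corresponding supporting hyperplanes $H_j$ of $K$ at $p_j$: each outer unit normal $n_j$ decomposes as $n_j = (\beta_j/r)(p_j-c) + \gamma_j q_j$, where $q_j \in P^\perp \cap \S^{n-1}$ and $\beta_j^2+\gamma_j^2=1$, and the vectors $\{p_j-c\}$ positively span $P$. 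Steinhagen's inequality \eqref{eq:Steinhagen} applied in dimension 2 to the section $K \cap P$ (whose inradius equals $r$) gives a direction $v_0 \in P$ for which $w(K\cap P, v_0) \leq 3r$.

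I then construct $v$ as $v_0$ plus an appropriate correction in $P^\perp$, chosen from the normals $\{n_j\}$, so that the half-space constraints $\esc{x-p_j}{n_j}\leq 0$ (for $x \in K$) combine with the Steinhagen bound within $P$ to give $w(K,u)\leq 4r$ for every $u \perp v$. The main obstacle is this last step: the explicit construction of $v$ and the verification of the uniform width bound, which requires carefully balancing the $3r$ Steinhagen estimate inside $P$ against the contributions $\gamma_j q_j$ of the normals transverse to $P$. This optimal combination of section and projection information in a single direction $v$ is the technical heart of the proof, and it is the section/projection technique announced in the abstract.
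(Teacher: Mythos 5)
Your reduction is the right one and matches the paper's: apply Jung's inequality \eqref{eq:Jung} in dimension $n-1$ to a projection $K|v^\perp$, so that the theorem follows once one finds a direction $v$ with $\D(K|v^\perp)\leq 4\inr_2(K)$. But the proof stops exactly where the work begins. You never construct the direction $v$, never verify the width bound, and you say so yourself (``the main obstacle is this last step''). An acknowledged obstacle at the single nontrivial step is a genuine gap, not a proof. Moreover, the route you sketch --- a maximal inscribed $2$-disc with contact normals $n_j=(\beta_j/r)(p_j-c)+\gamma_j q_j$, Steinhagen in the plane $P$ giving $\omega(K\cap P, v_0)\leq 3r$, and then some ``correction'' of $v_0$ by components of the $n_j$ transverse to $P$ --- gives no indication of how the $3r$ estimate inside $P$ and the half-space constraints would combine into a \emph{uniform} bound $4r$ on $\omega(K,u)$ for \emph{all} $u\perp v$; there is no candidate formula for $v$ and no mechanism controlling directions $u$ far from $P$.

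The paper's choice of $v$ is much simpler and requires none of this machinery: translate $K$ so that a diameter is the segment $[-p,p]$ and take $v=p/|p|_2$. To see that $\D(K|p^\perp)\leq 4\inr_2(K)$, pick $q_1,q_2\in K$ whose projections $p_1,p_2$ realize $\D(K|p^\perp)$ and consider the planar parallelogram
\[
P=\conv\left(\left\{\tfrac12(p+q_j),\tfrac12(-p+q_j):j=1,2\right\}\right)\subset K.
\]
One pair of its edges is parallel to $p$, so the corresponding height equals $|p_1-p_2|_2/2=\D(K|p^\perp)/2$; the other height is at least as large because $|q_1-q_2|_2\leq\D(K)=2|p|_2$. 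Hence $\omega(P;\aff(P))=\D(K|p^\perp)/2$, and since a parallelogram is centrally symmetric its planar inradius is half its width, giving $\inr_2(K)\geq\D(K|p^\perp)/4$. If you want to salvage your write-up, replace your unexecuted disc-and-normals construction by this diameter-direction argument; the Jung step you already have then finishes the proof.
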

Moreover, we establish an improved bound for the case $i=n-1$.
\begin{theorem}\label{th:R2rn-1}
For any convex body $K\subset\R^n$, it holds that
\begin{equation}
\frac{\cir_2(K)}{\inr_{n-1}(K)}\leq 2\sqrt{2}\sqrt{n}.
\end{equation}
\end{theorem}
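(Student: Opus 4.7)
The strategy is to exhibit a $2$-dimensional subspace $L\subset\R^n$ for which $\cir(K|L)\leq 2\sqrt{2}\sqrt{n}\,\inr_{n-1}(K)$; since $\cir_2(K)\leq\cir(K|L)$, this proves the theorem. Write $\rho=\inr_{n-1}(K)$ and choose an affine hyperplane $H\subset\R^n$ attaining $\inr(K\cap H)=\rho$. Set $D:=K\cap H$, let $u\in\S^{n-1}$ be a unit normal to $H$, and translate so that the center of an inscribed $(n-1)$-ball of radius $\rho$ in $D$ is the origin. The candidate plane will be $L=\mathrm{span}(u,v)$ for a suitable direction $v\in u^\perp$.

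Steinhagen's inequality \eqref{eq:Steinhagen} applied to $D$ inside $H\cong\R^{n-1}$ produces $v\in u^\perp$ with $w(D,v)\leq 2\sqrt{n-1}\,\rho$ (with the obvious modification when $n-1$ is even). For the $u$-direction, the inclusion $\inr_n(K)\leq\inr_{n-1}(K)=\rho$ combined with Steinhagen on $K$ itself gives some direction of width at most $2\sqrt{n}\,\rho$. Since $K|L$ is contained in the rectangle of dimensions $w(K,u)\times w(K,v)$ spanned by $u$ and $v$, we have $\cir(K|L)\leq\tfrac{1}{2}\sqrt{w(K,u)^2+w(K,v)^2}$, so the theorem reduces to controlling both widths by a multiple of $\sqrt{n}\,\rho$.

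The principal obstacle is that Steinhagen on $D$ controls only $w(D,v)$, whereas we need $w(K,v)$: the body $K$ can a priori flare outward in direction $v$ as one moves off $H$ along $\pm u$, so the full width may exceed the slice width. Exactly this is where the section-projection technique developed in Section~\ref{s:2} is required; it provides a quantitative comparison between the width of $K$ in a direction tangent to $H$ and the width of the maximal-inradius section $D$, essentially because any significant outward flare would allow a tilted hyperplane $H'$ to carry a section with $\inr(K\cap H')>\rho$, contradicting the choice of $H$. Turning this variational idea into a bound of the form $w(K,v)\leq 4\sqrt{n}\,\rho$ is the main technical step; once it is in place (together with an analogous estimate for $w(K,u)$), the computation $\tfrac{1}{2}\sqrt{(4\sqrt{n}\,\rho)^2+(4\sqrt{n}\,\rho)^2}=2\sqrt{2}\sqrt{n}\,\rho$ finishes the argument, and the factor $2$ lost relative to the naive $\sqrt{2n}$ coming from direct Steinhagen-in-each-direction is precisely the cost of the section-projection step.
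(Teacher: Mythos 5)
Your skeleton (project onto a well-chosen $2$-plane, trap the projection in a box, and finish with Steinhagen on an $(n-1)$-dimensional section) matches the paper's, and your constant bookkeeping is consistent with the target $2\sqrt{2}\sqrt{n}$. But there is a genuine gap at exactly the point you flag as ``the main technical step'': the bound $\omega(K,v)\leq 4\sqrt{n}\,\rho$ (and the companion bound on $\omega(K,u)$) is asserted, not proved, and neither mechanism you gesture at will deliver it. The section--projection technique of Section~\ref{s:2} compares $\widetilde{\inr}_2$ with $\inr_2$ for $0$-symmetric bodies in $\R^3$ via inscribed hexagons and plays no role in this theorem; and the variational idea that ``a significant outward flare would let a tilted hyperplane carry a section of larger inradius'' is left entirely unquantified --- it is not obvious it can be made to work, and it is not how the paper argues. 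Moreover, your choice of $H$ as the \emph{optimal} section for $\inr_{n-1}$ leaves $\omega(K,u)$ uncontrolled: Steinhagen applied to $K$ itself bounds the \emph{minimal} width $\omega(K)$, not the width in your specific normal direction $u$, which may be much larger.

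The paper resolves both issues by making the opposite choice of section: let $\e_2$ be a minimal-width direction of $K$ with the width chord $\pm(\omega(K)/2)\e_2\in K$, and take the section $K\cap\e_2^{\bot}$ \emph{through the midpoint of that chord}. Its inradius is automatically at most $\inr_{n-1}(K)$ (no optimality of the section is needed), so Steinhagen in $\e_2^{\bot}$ gives $\omega':=\omega(K\cap\e_2^{\bot};\e_2^{\bot})\leq 2\sqrt{n}\,\inr_{n-1}(K)$. If $\e_1$ is a minimal-width direction of this section, realized at $x,y\in K\cap\e_2^{\bot}$, then the supporting hyperplanes of $K$ at $x$ and $y$ that extend supporting $(n-2)$-planes of the section have outer normals in $\lin(\{\e_1,\e_2\})$; since these hyperplanes must leave the chord endpoints $\pm(\omega(K)/2)\e_2$ inside $K$, elementary trapezoid geometry confines $K|\lin(\{\e_1,\e_2\})$ to a box of size $2\omega'\times\omega(K)$ with $\omega(K)\leq 2\omega'$, whence $\cir_2(K)\leq\sqrt{(\omega')^2+(\omega(K)/2)^2}\leq\sqrt{2}\,\omega'$. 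The interior width chord transverse to the section is the ingredient your outline is missing; without it (or some substitute) the flare of $K$ off the section in the directions $\pm u$ is genuinely unbounded relative to the slice width, and your rectangle containment cannot be established.
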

This result improves inequality \eqref{eq:PerPukh},
providing the right order in the dimension.

The outer radii $\cir_i(K)$ and the inner radii $\inr_i(K)$ have been extended
to arbitrary Minkowski spaces, i.e., finite dimensional
normed spaces (cf.~\cite{GK}). For the sake of completeness,
and although this paper is focused in the Euclidean metric,
we add a short section \ref{s:5} in which we provide a general upper bound
for the analogous quotient. Indeed, this bound
improves \eqref{eq:PerPukh0sym} in some cases.

For more information on the successive radii, their size for particular bodies
as well as computational aspects of these radii we refer to \cite{Ball,BH,BH2,Br05,BrKo,BrT06,GK,GK2,H92}.
Their relation with other measures have been studied in \cite{BH,HHC,HHC2},
their behavior with respect to other binary operations in \cite{CYL,GoHC12,GoHC14},
and their extensions to containers different from the Euclidean ball in \cite{GK,Ja}.
Moreover, quotients of different radii have been studied in \cite{BH2,BrKo,Go,GK,H92}.
We would like to point out that successive radii are particular
cases of the so-called Gelfand and Kolmogorov numbers in Banach Space Theory (cf.~\cite{CHP,GoHCH,Pin}),
and are widely used in Approximation Theory.


We now establish further notation. Let $\K^n$ denote the family of all convex bodies, i.e., compact convex sets, in
the $n$-dimensional Euclidean space $\R^n$, and we always assume
$K\in\K^n$. The subset of $\K^n$
consisting of all centrally (or $0$-) symmetric convex bodies, i.e.,
such that if $x=(x_1,\dots,x_n)^{\intercal}\in K$
then $-x\in K$, is denoted by $\K^n_0$. Let 
$|\,\cdot\,|_2$ be the standard 
Euclidean norm in $\R^n$ and $\Ball_n$ be the $n$-dimensional Euclidean unit ball.

The set of all $i$-dimensional linear subspaces of $\R^n$ is denoted by
$\L^n_i$. For the sake of brevity we denote by $\Ball_{i,L} = \Ball_n \cap L$
for any $L\in\L^n_i$.
We denote by $\lin(C)$, $\aff(C)$ and
$\conv(S)$, the linear, affine and convex hull of $C$, respectively, and we write $\relbd(C)$
to denote the relative boundary of any $C\subset\R^n$.
For any $x,y\in\R^n$, the line segment
with endpoints $x$ and $y$ is denoted by $[x,y]:=\conv(\{x,y\})$
We denote by $L^{\bot}$ and $u^\bot$
the orthogonal complement to $L$ and $\lin(\{u\})$, respectively, for any $L\in\L^n_i$ and $u\in\R^n$.
By $K|L$ we denote the orthogonal projection of $K$ onto
$L$. We use $\e_i$ for $i$-th canonical unit vector in
$\R^n$.

The width in the (unit) direction $u$, the diameter, the minimal width,
the circumradius and the inradius of $K$, all measured in the
Euclidean distance, are denoted by
$\omega(K,u)$, $\D(K)$, $\omega(K)$, $\cir(K)$ and $\inr(K)$,
respectively. For more information on these functionals and their
properties we refer to \cite[pp.~56--59]{BF}. Whenever
$K\in\K^n$ is contained in an affine subspace $x+L$, with $L\in\L^n_i$ and $x\in\R^n$,
we write $f(K;x+L)$ to denote that the functional $f$ has to be
evaluated with respect to the subspace $x+L$.
With this notation, the outer and inner measures
$\cir_i(K)$ and $\inr_i(K)$ can be expressed as
\begin{equation}\label{eq:definition}
\begin{array}{ll}
\displaystyle\cir_i(K)=\min_{L\in\L_i^n}\cir(K|L) & \text{ and }
\quad\displaystyle\inr_i(K)=\max_{L\in\L_i^n}\max_{x\in L^{\bot}}\inr(K\cap(x+L);x+L).
\end{array}
\end{equation}
Slightly modifying the definition of the inner radius $\inr_i(K)$, we obtain
another sequence of interior radii (\emph{cf}.~\cite{BH}, see also \cite{BHT}),
\[
\widetilde{\inr}_i(K):=\max_{L\in\L^n_i}\inr(K|L;L).
\]
These sequences of inner and outer measures extend the classic
radii, namely,
\begin{equation*}
\begin{split}
\cir_n(K) & =\cir(K),\quad \inr_n(K)=\widetilde{\inr}_n(K)=\inr(K),\\
\cir_1(K) & =\frac{\omega(K)}{2},\quad \inr_1(K)=\widetilde{\inr}_1(K)=\frac{\D(K)}{2}.
\end{split}
\end{equation*}
Moreover, the outer radii are increasing in $i$, whereas both
sequences of inner radii are decreasing in $i$, $1\leq i\leq n$. We also have that
$\inr_i(K)\leq\widetilde{\inr}_i(K)$, and for any $K\in\K^n_0$ and $1\leq i\leq n$,
then
\begin{equation}\label{eq:R_itildeinr_i}
\frac{\cir_{n-i+1}(K)}{\widetilde{\inr}_i(K)}\leq\sqrt{n-i+1}
\end{equation}
(see Theorem 1.3 in \cite{Go}).

\section{Centrally symmetric estimate}\label{s:2}

We first establish a lemma that will be needed in the proof of Theorem \ref{th:0symm3dim}.
This lemma reconstructs the largest disc contained in $K$, knowing in advance
that a projection of $K$ in a plane $L$ contains a disc of prescribed radius.
The main idea in the proof is to find six points in $K$ (three and their mirrored points in the origin),
such that they are all contained in a $2$-dimensional subspace and
their orthogonal projection onto $L$ forms a regular hexagon.
To do so, we build two sequences of six-tuples of points in $K$, and
we find the desired six-tuple as a limit of those sequences of six-tuples, using
a Bolzano-type argument.
\begin{lemma}\label{l:2-dim hexagon}
Let $K\in\K^3_0$, $L=\lin(\{\e_1,\e_2\})$ and
$\inr>0$ be such that $\inr B_{2,L}\subset K|L$. Then, there exist a
regular hexagon $\conv(\{\pm p_i:i=1,2,3\})$ inscribed in $\inr B_{2,L}$ and
points $\pm q_i\in K$, $i=1,2,3$, such that $\pm q_i|L=\pm p_i$,
$i=1,2,3$, and $\dim\conv(\{\pm q_i:i=1,2,3\})=2$.
\end{lemma}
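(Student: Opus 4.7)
The plan is to reduce the statement to a one-parameter Bolzano/intermediate-value argument over the rotation angle $\theta$ of the hexagon. Identifying $L$ with $\R^2$, any point $q\in K$ with $q|L=p$ has the form $q=(p,t)$, $t\in\R$. Parametrise the inscribed regular hexagons by
\[
v_i(\theta)=\inr\bigl(\cos(\theta+(i-1)\pi/3),\sin(\theta+(i-1)\pi/3)\bigr),\quad i=1,2,3,
\]
and record the elementary hexagon identity $v_2(\theta)=v_1(\theta)+v_3(\theta)$. For $p\in\inr\Ball_{2,L}$ set $K_p:=\{t\in\R:(p,t)\in K\}=[a^-(p),a^+(p)]$; central symmetry of $K$ gives $a^-(p)=-a^+(-p)$. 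Since the six sought points are $0$-symmetric, the condition $\dim\conv(\{\pm q_i\})=2$ says they lie in a linear $2$-plane, and for $q_i=(v_i(\theta),t_i)$ this amounts to the single scalar relation $t_2=t_1+t_3$ (coming from $v_2=v_1+v_3$).

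Hence the problem becomes: find $\theta$ and $t_1\in K_{v_1(\theta)}$, $t_3\in K_{v_3(\theta)}$ with $t_1+t_3\in K_{v_2(\theta)}$, that is, $\bigl(K_{v_1(\theta)}+K_{v_3(\theta)}\bigr)\cap K_{v_2(\theta)}\neq\emptyset$. Unpacking the two endpoint inequalities via $a^-(p)=-a^+(-p)$, this is equivalent to
\begin{align*}
A(\theta)&:=a^+(v_2(\theta))+a^+(-v_1(\theta))+a^+(-v_3(\theta))\geq 0,\\
B(\theta)&:=a^+(v_1(\theta))+a^+(v_3(\theta))+a^+(-v_2(\theta))\geq 0.
\end{align*}
Two observations are decisive. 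First, $A(\theta)+B(\theta)=\sum_{i=1}^{3}\bigl(a^+(v_i(\theta))-a^-(v_i(\theta))\bigr)\geq 0$, a sum of non-negative vertical slice-lengths. Second, a rotation by $\pi/3$ cyclically permutes the hexagon vertices and yields $A(\theta+\pi/3)=B(\theta)$, $B(\theta+\pi/3)=A(\theta)$, so $A-B$ must change sign on $[\theta,\theta+\pi/3]$. A Bolzano argument then produces $\theta^*$ with $A(\theta^*)=B(\theta^*)$, and combining with $A+B\geq 0$ yields $A(\theta^*),B(\theta^*)\geq 0$.

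At such $\theta^*$ one picks admissible heights $(t_1,t_2,t_3)$ with $t_2=t_1+t_3$ and sets $q_i=(v_i(\theta^*),t_i)$; central symmetry places $\pm q_i$ in $K$ with the required projections, and $\dim\conv(\{\pm q_i\})=2$ because $v_1(\theta^*),v_3(\theta^*)$ are linearly independent in $L$. The main technical obstacle is justifying the Bolzano step: the concave function $a^+$ is continuous on the relative interior of $K|L$ but possibly not on its boundary. When the circle $\{|v|_2=\inr\}$ meets $\bd(K|L)$, I would apply the argument to the thickened bodies $K+\varepsilon\Ball_3$ (whose projections strictly contain $\inr\Ball_{2,L}$ in their interior, so $a^+$ is continuous on the circle), obtain hexagons and lifts for each $\varepsilon>0$, and extract a limit configuration inside $K$ by compactness as $\varepsilon\to 0$.
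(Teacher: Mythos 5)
Your argument is correct, and while it shares the paper's basic skeleton (a one-parameter Bolzano argument over the family of rotating inscribed hexagons, with central symmetry forcing a sign change), the execution is genuinely different and in several respects cleaner. The paper works with \emph{arbitrary} lifts $x^u,\overline{x}^u,\widetilde{x}^u$ of the three vertices, bisects the arc between a point $u$ and its antipode $-u$ according to the sign of $\overline{x}^u_{3}+\widetilde{x}^u_{3}-x^u_{3}$, extracts convergent subsequences of six-tuples, and then repairs the possible failure of equality in the limit through a three-way case analysis that interpolates convexly between the two limit configurations. You instead make the lifts canonical by introducing the fiber endpoints $a^{\pm}(p)$, observe that solvability of $t_2=t_1+t_3$ is exactly the interval-intersection condition $A(\theta)\geq 0$, $B(\theta)\geq 0$, and obtain both inequalities at once from the identity $A+B=\sum_i\bigl(a^+(v_i)-a^-(v_i)\bigr)\geq 0$ together with the antisymmetry $A(\theta+\pi/3)=B(\theta)$ and the intermediate value theorem; this eliminates both the dependence on arbitrary lifts and the paper's endgame case analysis. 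The price is the continuity of $a^+$ on $\relbd(\inr B_{2,L})$, which genuinely can fail when that circle meets $\relbd(K|L)$ (the paper's subsequence-plus-interpolation device is precisely what lets it avoid any continuity hypothesis); you correctly identify this as the one technical obstacle and your fix --- running the argument for $K+\varepsilon\Ball_3$, where the circle lies in the interior of the projection so $a^+$ is continuous there, and then passing to a limit configuration by compactness, noting that $t_2=t_1+t_3$ and the linear independence of $q_1,q_3$ (forced by that of $v_1,v_3$) survive the limit --- is sound. Both routes are valid; yours is shorter and more transparent, the paper's is more self-contained in that it never needs the regularization step.
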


\begin{proof}
For a fixed $u_1\in\relbd(\inr B_{2,L})$, we consider the regular hexagon
inscribed in $\inr B_{2,L}$ and having $u_1$ as a vertex, and call
$\overline{u}_1,\widetilde{u}_1$ the closest vertices to $u_1$.

Since $u_1,\overline{u}_1,\widetilde{u}_1\in K|L$, there exist points
$x_1^u,\overline{x}_1^u,\widetilde{x}_1^u\in K$ such that
\[
x_1^u|L=u_1,\quad\overline{x}_1^u|L=\overline{u}_1\quad\text{and}
\quad\widetilde{x}_1^u|L=\widetilde{u}_1.
\]
If $x^u_1\in\lin(\{\overline{x}^u_1,\widetilde{x}^u_1\})$, then $\conv(\{\pm
x^u_1,\pm \overline{x}^u_1,\pm \widetilde{x}^u_1\})$ is a 2-dimensional
convex body whose projection onto $L$ is the regular hexagon $\conv(\{\pm
u_1,\pm \overline{u}_1,\pm \widetilde{u}_1\})$. In this case, $p_1:=u_1$,
$p_2:=\overline{u}_1$, $p_3:=\widetilde{u}_1$, and $q_1:=x^u_1$,
$q_2:=\overline{x}^u_1$, $q_3:=\widetilde{x}^u_1$ show the lemma (\emph{cf.}~Figure \ref{f:cross}). So, we
assume $x^u_1\notin\lin(\{\overline{x}^u_1,\widetilde{x}^u_1\})$.

We observe that $x^u_1\in\lin(\{\overline{x}^u_1,\widetilde{x}^u_1\})$ if
and only if there exist $t,s\in\R$ such that
\[
t\bigl(\overline{u}_1,\overline{x}_{13}^u\bigr)^{\intercal}+s\bigl(\widetilde{u}_1,\widetilde{x}_{13}^u\bigr)^{\intercal}
=t\overline{x}^u_1+s\widetilde{x}^u_1=x^u_1=\bigl(u_1,x^u_{13}\bigr)^{\intercal},
\]
which holds if and only if $t\overline{u}_1+s\widetilde{u}_1=u_1$ and $t
\overline{x}^u_{13}+s\widetilde{x}^u_{13}=x^u_{13}$. Since
$u_1,\overline{u}_1,\widetilde{u}_1$ are consecutive vertices of a regular
hexagon, the unique solution of $t\overline{u}_1+s\widetilde{u}_1=u_1$ is
$t=s=1$. Therefore,
$x^u_1\notin\lin(\{\overline{x}^u_1,\widetilde{x}^u_1\})$ if and only if
$\overline{x}^u_{13}+\widetilde{x}^u_{13}\neq x^u_{13}$. We suppose
without loss of generality that
$\overline{x}^u_{13}+\widetilde{x}^u_{13}>x^u_{13}$. For the rest of the
proof we will use the same notation in the construction of the points,
namely: from any point $v\in\relbd(\inr B_{2,L})$, we derive
$\overline{v}$, $\widetilde{v}$, $x^v$, etc.

We write $w_1:=-u_1$. Then $\overline{w}_1=-\overline{u}_1$,
$\widetilde{w}_1=-\widetilde{u}_1$ and the symmetry of $K$ imply that
$x^w_1=-x^u_1$, $\overline{x}^w_1=-\overline{x}^u_1$,
$\widetilde{x}^w_1=-\widetilde{x}^u_1$, and thus
\[
\overline{x}^w_{13}+\widetilde{x}^w_{13}=-\overline{x}^u_{13}-\widetilde{x}^u_{13}<-x^u_{13}
=x^w_{13}.
\]
Let $u_2\in\relbd(\inr B_{2,L})$ be the ``midpoint'' on the circumference
$\relbd(\inr B_{2,L})$ between $u_1$ and $w_1$. If
$x^u_{23}=\overline{x}^u_{23}+\widetilde{x}^u_{23}$ then $p_1:=u_2$,
$p_2:=\overline{u}_2$, $p_3:=\widetilde{u}_2$, and $q_1:=x^u_2$,
$q_2:=\overline{x}^u_2$, $q_3:=\widetilde{x}^u_2$ show the lemma. If that
is not the case, then we can assume that
$\overline{x}^u_{23}+\widetilde{x}^u_{23}>x^u_{23}$ and define $w_2:=w_1$;
otherwise we just take $w_2$ to be the
midpoint and define $u_2:=u_1$. In the next step we take again the
midpoint $u_3=(u_2+w_2)/\abs{u_2+w_2}_2\in\relbd(\inr B_{2,L})$ and do the
same construction.

Iterating the process, either we find three points $p_i$, $i=1,2,3$,
verifying the required condition in some step, or we get two sequences
$(u_n)_n,(w_n)_n\subset\relbd(\inr B_{2,L})$, satisfying the following
properties:
\begin{itemize}\itemsep0pt
\item $d(u_n,w_n)=(1/2)d(u_{n-1},w_{n-1})$, where $d(a,b)$ is the length of the
shortest arc in $\relbd(\inr B_{2,L})$ joining the points
$a,b\in\relbd(\inr B_{2,L})$.
\item $\lim_{n\rightarrow\infty}u_n=\lim_{n\rightarrow\infty}w_n\in\relbd(\inr
B_{2,L})$. Let $p_1:=\lim_{n\rightarrow\infty}u_n$.
\item The vertices of the two corresponding hexagons sequences tend to the appropriate
limit, say
$\lim_{n\rightarrow\infty}\overline{u}_n=\lim_{n\rightarrow\infty}\overline{w}_n=:p_2$
and
$\lim_{n\rightarrow\infty}\widetilde{u}_n=\lim_{n\rightarrow\infty}\widetilde{w}_n=:p_3$.
\item $\overline{x}^u_{n3}+\widetilde{x}^u_{n3}>x^u_{n3}$ and
$\overline{x}^w_{n3}+\widetilde{x}^w_{n3}<x^w_{n3}$, for all $n\in\N$.
\end{itemize}
With this process, we also get sequences of points in $K$, namely
$(x^u_n)_n$, $(\overline{x}^u_n)_n$, $(\widetilde{x}^u_n)_n$, $(x^w_n)_n$,
$(\overline{x}^w_n)_n$ and $(\widetilde{x}^w_n)_n$. Since they are bounded
sequences (because they are contained in $K$), there exist convergent
subsequences in $K$ and we can suppose without loss of generality that
they are the same sequences. Thus
\begin{equation*}
\begin{split}
\lim_{n\rightarrow\infty}x^u_n & =x^u_0\in K,
    \quad\lim_{n\rightarrow\infty}\overline{x}^u_n=\overline{x}^u_0\in K,
    \quad\lim_{n\rightarrow\infty}\widetilde{x}^u_n=\widetilde{x}^u_0\in K,\\
\lim_{n\rightarrow\infty}x^w_n & =x^w_0\in K,
    \quad\lim_{n\rightarrow\infty} \overline{x}^w_n=\overline{x}^w_0\in K,
    \quad\lim_{n\rightarrow\infty}\widetilde{x}^w_n=\widetilde{x}^w_0\in K.
\end{split}
\end{equation*}
We observe that
\[
x^u_0|L=\left(\lim_{n\rightarrow\infty}x^u_n\right)|L=\lim_{n\rightarrow\infty}
(x^u_n|L)=\lim_{n\rightarrow\infty}u_n=p_1,
\]
and analogously,
\[
x^w_0|L=p_1,\quad \overline{x}_0^u|L=\overline{x}^w_0|L=p_2\quad\text{and}
\quad\widetilde{x}^u_0|L=\widetilde{x}^w_0|L=p_3.
\]
We notice also that
\[
\overline{x}^u_{03}+\widetilde{x}^u_{03}=\left(\lim_{n\rightarrow\infty}\overline{x}^u_n\right)_3+
\left(\lim_{n\rightarrow\infty}\widetilde{x}^u_n\right)_3=
\lim_{n\rightarrow\infty}\overline{x}^u_{n3}+\lim_{n\rightarrow\infty}\widetilde{x}^u_{n3}=
\lim_{n\rightarrow\infty}\bigl(\overline{x}^u_{n3}+\widetilde{x}^u_{n3}\bigr)\geq
\lim_{n\rightarrow\infty}x^u_{n3}=x^u_{03},
\]
and analogously, $\overline{x}^w_{03}+\widetilde{x}^w_{03}\leq x^w_{03}$.

If $\overline{x}^u_{03}+\widetilde{x}^u_{03}=x^u_{03}$ then the set of
points $q_1:=x_0^u$, $q_2:=\overline{x}_0^u$, $q_3:=\widetilde{x}_0^u$
together with $p_1,p_2,p_3$ show the lemma. Otherwise,
$\overline{x}^u_{03}+\widetilde{x}^u_{03}>x^u_{03}$. We observe that if
$\overline{x}^w_{03}+\widetilde{x}^u_{03}\leq x^u_{03}$ then the lemma is
proved: in fact, if this is the case, there exists $\lambda\in[0,1)$ such
that
\[
\bigl(\lambda
\overline{x}^u_0+(1-\lambda)\overline{x}^w_0\bigr)_3+\widetilde{x}^u_{03}=
\lambda
\overline{x}^u_{03}+(1-\lambda)\overline{x}^w_{03}+\widetilde{x}^u_{03}=x^u_{03},
\]
with
\[
\lambda \overline{x}^u_0+(1-\lambda)\overline{x}^w_0\in
K,\quad\bigl(\lambda\overline{x}^u_0+(1-\lambda)\overline{x}^w_0\bigr)|L=\lambda
p_1+(1-\lambda)p_1=p_1,
\]
and thus the set of points $q_1:=x^u_0$, $q_2:=\lambda
\overline{x}^u_0+(1-\lambda)\overline{x}^w_0$, $q_3:=\widetilde{x}^u_0$
shows the lemma.

So we assume that $\overline{x}^w_{03}+\widetilde{x}^u_{03}>x^u_{03}$.
Similarly, we now have that if
$\overline{x}^w_{03}+\widetilde{x}^w_{03}\leq x^u_{03}$, then there exists
$\lambda\in[0,1)$ such that
\[
\overline{x}^w_{03}+\bigl(\lambda
\widetilde{x}^u_0+(1-\lambda)\widetilde{x}^w_0\bigr)_3=
\overline{x}^w_{03}+\lambda
\widetilde{x}^u_{03}+(1-\lambda)\widetilde{x}^w_{03}=x^u_{03},
\]
and hence the set of points $q_1:=x^u_0$, $q_2:=\overline{x}^w_0$,
$q_3:=\lambda \widetilde{x}^u_0+(1-\lambda)\widetilde{x}^w_0$ shows the
lemma.

So we assume once more that this is not the case, i.e., that
$\overline{x}^w_{03}+\widetilde{x}^w_{03}>x^u_{03}$. But then, since
$\overline{x}^w_{03}+\widetilde{x}^w_{03}\leq x^w_{03}$ there exists
$\lambda\in[0,1)$ such that
\[
\overline{x}^w_{03}+\widetilde{x}^w_{03}=\lambda
x_{03}^u+(1-\lambda)x^w_{03}=\bigl(\lambda x_0^u+(1-\lambda)x^w_0\bigr)_3,
\]
and thus the points $q_1:=\lambda x_0^u+(1-\lambda)x^w_0$,
$q_2:=\overline{x}^w_0$, $q_3:=\widetilde{x}^w_0$ show the lemma.
\end{proof}

\begin{figure}
  \begin{center}
    \includegraphics[width=0.5\textwidth]{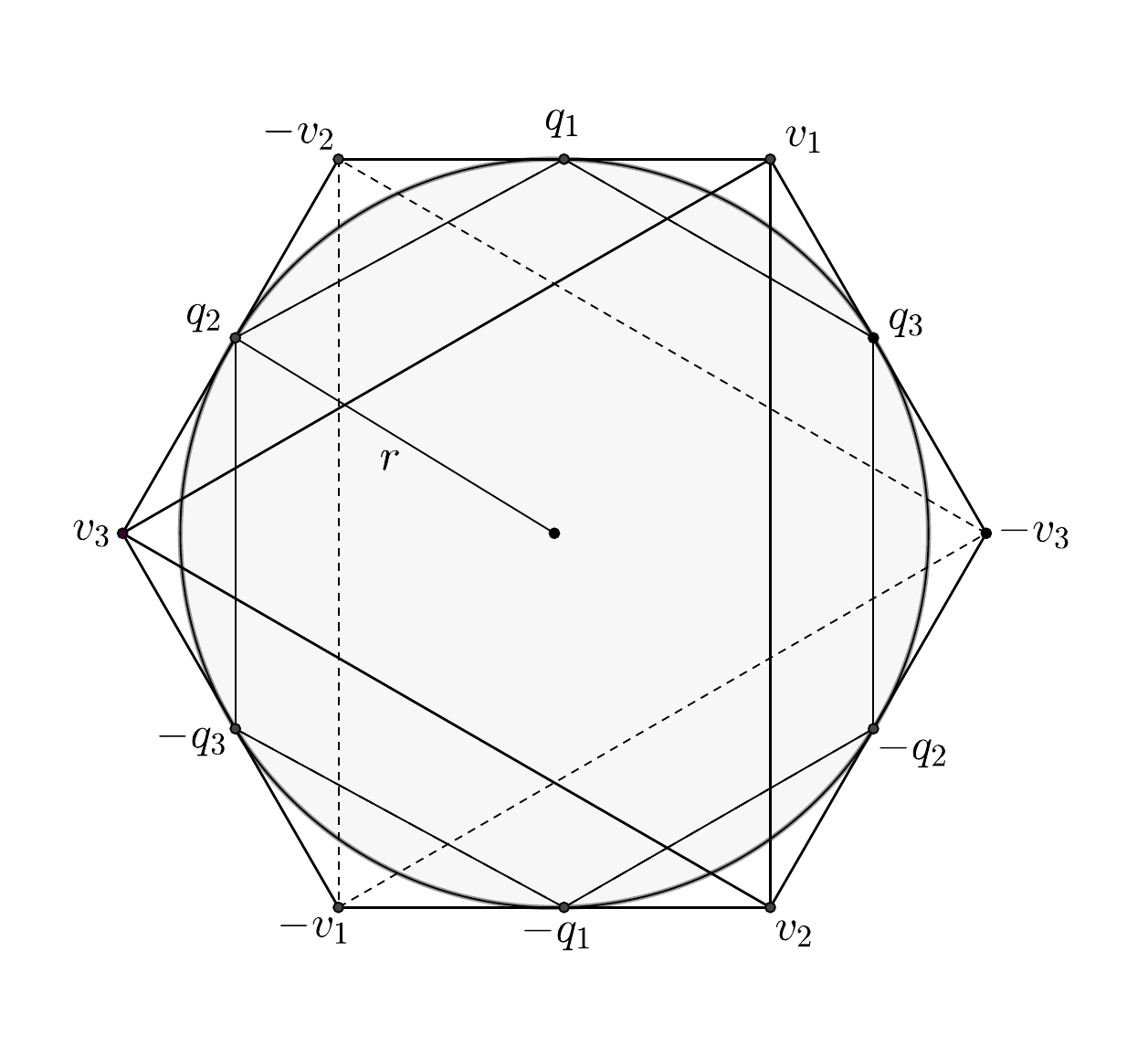}
    \caption{Upper view of the crosspolytope $P_{\varepsilon}:=\conv(\{\pm v_i:i=1,2,3\})$,
    where $v_1=(1/\sqrt{3},1,\varepsilon)^\intercal$, $v_2=(1/\sqrt{3},-1,\varepsilon)^\intercal$, $v_3=(-2/\sqrt{3},0,\varepsilon)^\intercal$,
    and $\varepsilon>0$.
    $P_\varepsilon$ has a hexagonal central section
    of vertices $\pm q_i$, $i=1,2,3$, and $r\Ball_{2,\e_3^\bot}\subset P_{\varepsilon}|\e_3^\bot$.}\label{f:cross}
  \end{center}
\end{figure}

Using Lemma \ref{l:2-dim hexagon}, we derive an inequality relating
$\inr_2(K)$ and $\widetilde{\inr}_2(K)$ for any $3$-dimensional set.
\begin{theorem}\label{th:inner_2}
Let $K\in\K^3_0$. Then
\[
\frac{\widetilde{\inr}_2(K)}{\inr_2(K)}\leq\frac{2}{\sqrt{3}}.
\]
The inequality is best possible.
\end{theorem}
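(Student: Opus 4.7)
The plan is to extract from the extremal projection a hexagonal central section of $K$ via Lemma~\ref{l:2-dim hexagon}, and to control its inradius using the fact that orthogonal projection is a contraction. Set $r := \widetilde{\inr}_2(K)$ and pick $L \in \L^3_2$ achieving $\inr(K|L;L) = r$. Because $K \in \K^3_0$ the body $K|L$ is centrally symmetric in $L$, so its maximal inscribed disc is centred at the origin of $L$; thus $rB_{2,L} \subset K|L$. After a rotation we may assume $L = \lin(\{\e_1,\e_2\})$, and Lemma~\ref{l:2-dim hexagon} furnishes a regular hexagon $\conv(\{\pm p_i : i=1,2,3\})$ inscribed in $rB_{2,L}$ together with points $\pm q_i \in K$ with $\pm q_i|L = \pm p_i$ and $\dim\conv(\{\pm q_i\}) = 2$. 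By central symmetry these six points span a plane $M \in \L^3_2$ through the origin, and $H := \conv(\{\pm q_i : i=1,2,3\}) \subset K \cap M$ is a centrally symmetric convex hexagon in $M$ (the six vertices cannot collapse, since they project bijectively onto the vertices of a regular hexagon).

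The heart of the argument is the estimate $\inr(H; M) \geq r\sqrt{3}/2$. Since $H$ is a centrally symmetric convex polygon in the two-plane $M$, its inradius equals the minimum, over the six edges of $H$, of the Euclidean distance in $M$ from the origin to the affine line supporting that edge. Now the restriction to $M$ of the orthogonal projection $P:\R^3 \to L$ is a linear map of operator norm at most one, hence $1$-Lipschitz; in particular, for any line $\ell \subset M$,
\[
d_M(0, \ell) \geq d_L(0, P(\ell)).
\]
Applied to each edge line $\ell_k$ of $H$, the right-hand side is the distance from $0 \in L$ to the supporting line of the corresponding edge of the regular hexagon $\conv(\{\pm p_i\}) \subset rB_{2,L}$, which equals $r\sqrt{3}/2$. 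Consequently $\inr(H; M) \geq r\sqrt{3}/2$.

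Combining the two inequalities gives $\inr_2(K) \geq \inr(K \cap M; M) \geq \inr(H; M) \geq (\sqrt{3}/2)\,\widetilde{\inr}_2(K)$, as claimed. For sharpness the crosspolytope family $P_\varepsilon$ displayed in Figure~\ref{f:cross} does the job: its projection onto $\e_3^\bot$ is a regular hexagon of circumradius $2/\sqrt{3}$ (inradius $1$), while its central section by $\e_3^\bot$ is a regular hexagon of circumradius $1$ (inradius $\sqrt{3}/2$), so the ratio $\widetilde{\inr}_2(P_\varepsilon)/\inr_2(P_\varepsilon)$ approaches $2/\sqrt{3}$ as $\varepsilon \to 0^+$. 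The genuine obstacle is concealed in Lemma~\ref{l:2-dim hexagon} itself, which forces all six preimages to live in a common two-plane through the origin; without that coplanarity one would control the preimages only one at a time and lose the inscribed-disc structure inside $M$. Once the coplanar hexagonal configuration is in hand, the remaining contraction step is elementary.
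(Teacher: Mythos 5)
Your argument is correct and follows essentially the same route as the paper: choose the extremal projection plane, invoke Lemma~\ref{l:2-dim hexagon} to obtain the coplanar hexagonal configuration, and use the contraction property of the orthogonal projection to bound the section's inradius from below by that of the regular hexagon, $r\sqrt{3}/2$; your distance-to-edge-lines formulation is just a rephrasing of the paper's computation of $\min_{x\in\relbd C}\abs{x}_2$ via $\abs{x_0}_2\geq\abs{x_0|L}_2$. The only loose end is in the sharpness discussion, where you read off $\inr_2(P_\varepsilon)=\sqrt{3}/2$ from the central section alone without excluding a better planar section elsewhere -- the paper closes this by citing \cite{GoHC12}.
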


\begin{proof}
By definition of $\widetilde{\inr}_2(K)$, there exists $L\in\L^3_2$ such
that $\widetilde{\inr}_2(K)=\inr(K|L;L)$. After a suitable rigid motion,
we can assume without loss of generality that $L=\lin(\{\e_1,\e_2\})$ and
that $\inr(K|L;L)B_{2,L}\subset K|L$. We now apply Lemma \ref{l:2-dim
hexagon} and find an inscribed regular hexagon
\[
H=\conv(\{\pm p_i:i=1,2,3\})\subset \inr(K|L;L) B_{2,L}
\]
and points $\pm q_i\in K$, $i=1,2,3$, such that
\[
\pm q_i|L=\pm p_i,\; i=1,2,3,\quad\text{ and }\quad \dim\conv(\{\pm
q_i:i=1,2,3\})=2.
\]
We call $C=\conv(\{\pm q_i:i=1,2,3\})$ and $L'=\lin C$. Then,
\[
\inr_2(K)\geq\inr(K\cap L';L')\geq\inr(C;L').
\]
We now show that $\inr(C;L')\geq\inr(H;L)$. Clearly,
\[
\inr(C;L')=\min_{x\in\relbd C}\abs{x}_2=\abs{x_0}_2
\]
for some $x_0\in\relbd C$. We can suppose that the points $q_1$ and $q_2$
are consecutive vertices and that $x_0=\lambda q_1+(1-\lambda)q_2$, for
some $\lambda\in(0,1)$. Since $q_j|L=p_j$, we have
$q_j=(p_j,q_{j3})^{\intercal}$, $j=1,2$, and~then
\[
\abs{x_0}^2_2=\bigl|\lambda q_1+(1-\lambda)q_2\bigr|^2_2=\bigl|\lambda
p_1+(1-\lambda)p_2\bigr|^2_2+\bigl|\lambda
q_{13}+(1-\lambda)q_{23}\bigr|^2\geq\bigl|\lambda
p_1+(1-\lambda)p_2\bigr|^2_2.
\]
The point $\lambda p_1+(1-\lambda)p_2\in\relbd H$, and therefore
\[
\bigl|\lambda p_1+(1-\lambda)p_2\bigr|_2\geq\min_{y\in\relbd
H}\abs{y}_2=\inr(H;L).
\]
From that, we get $\inr(C;L')=\abs{x_0}_2\geq\inr(H;L)$ and then
\[
\inr_2(K)\geq\inr(C;L')\geq\inr(H;L)=\frac{\sqrt{3}}{2}\,\widetilde{\inr}_2(K).
\]

It remains to be shown that the inequality is best possible. Let
$P_{\varepsilon}=\conv(\{\pm v_1,\pm v_2,\pm v_3\})$ be the non-regular
triangular antiprism in $\R^3$ with vertices
\[
v_1=\left(\frac{1}{\sqrt{3}},1,\varepsilon\right)^{\intercal},\quad
v_2=\left(\frac{1}{\sqrt{3}},-1,\varepsilon\right)^{\intercal},\quad
v_3=\left(-\frac{2}{\sqrt{3}},0,\varepsilon\right)^{\intercal},
\]
$\varepsilon>0$ (see Figure \ref{f:cross}). In pg. 10 and Figure 1 of \cite{GoHC12} it was shown that
$\inr_2(P_{\varepsilon})=\sqrt{3}/2$ for $\varepsilon$ small enough.
Since the set $P_{\varepsilon}|\lin(\{\e_1,\e_2\})$ is a regular hexagon
with $2$-dimensional inradius $1$, then
$\widetilde{\inr}_2(P_{\varepsilon})\geq 1$. Therefore
\[
1\leq\widetilde{\inr}_2(P_{\varepsilon})\leq\frac{2}{\sqrt{3}}\,\inr_2(P_{\varepsilon})=
\frac{2}{\sqrt{3}}\frac{\sqrt{3}}{2}=1,
\]
and thus
$\widetilde{\inr}_2(P_{\varepsilon})=(2/\sqrt{3})\,\inr_2(P_{\varepsilon})$.
\end{proof}

\begin{proof}[Proof of Theorem \ref{th:0symm3dim}]
Particularizing \eqref{eq:R_itildeinr_i} in $n=3$ and $i=2$,
together with Theorem \ref{th:inner_2}, we get that
\[
\frac{\cir_2(K)}{\inr_2(K)}=\frac{\cir_2(K)}{\widetilde{\inr}_2(K)}
\frac{\widetilde{\inr}_2(K)}{\inr_2(K)}\leq\sqrt{2}\frac{2}{\sqrt{3}}.\qedhere
\]
\end{proof}

Before concluding this section, we leave to the reader the analogous statement
to Lemma \ref{l:2-dim hexagon} and Theorem \ref{th:inner_2} for non-symmetric
convex sets.
\begin{lemma}\label{lem:general3}
Let $K\in\K^3$, $L=\lin(\{\e_1,\e_2\})$ and
$\inr>0$ be such that $\inr B_{2,L}\subset K|L$. Then, there exist a
square $\conv(\{\pm p_i:i=1,2\})$ inscribed in $\inr B_{2,L}$ and
points $q_{i,\pm}\in K$, $i=1,2$, such that $q_{i,\pm}|L=\pm p_i$,
$i=1,2$, and $\dim\conv(\{q_{i,\pm}:i=1,2\})=2$.
\end{lemma}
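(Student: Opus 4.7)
The plan is to carry out the Bolzano-type argument of Lemma~\ref{l:2-dim hexagon} in an adapted form, since without central symmetry the four lifts of $\pm p_1,\pm p_2$ are genuinely independent. I would begin by observing that, writing the desired lifts as $q_{i,\pm}=(\pm p_i,a_{i,\pm})^\intercal$ and using that $p_1,p_2$ are linearly independent in $L$, the four points are coplanar (equivalently $\dim\conv(\{q_{i,\pm}:i=1,2\})\leq 2$, with equality automatic since their projections form a nondegenerate square) if and only if the single scalar condition
\[
a_{1,+}+a_{1,-}=a_{2,+}+a_{2,-}
\]
holds. The problem therefore reduces to exhibiting one inscribed square for which this equation admits a valid choice of lifts.

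Parametrize the inscribed squares by the angle of one vertex, setting $p_1(\theta):=\inr(\cos\theta,\sin\theta)^\intercal$ and $p_2(\theta):=\inr(-\sin\theta,\cos\theta)^\intercal$, and introduce the support functions $h^+(p):=\max\{z\in\R:(p,z)^\intercal\in K\}$ and $h^-(p):=\min\{z\in\R:(p,z)^\intercal\in K\}$, which are continuous on $K|L$. The set of values attainable as $a_{i,+}+a_{i,-}$ over admissible lifts is then the closed interval
\[
I_i(\theta)=\bigl[h^-(p_i(\theta))+h^-(-p_i(\theta)),\,h^+(p_i(\theta))+h^+(-p_i(\theta))\bigr],
\]
so the coplanarity condition is attainable precisely when $I_1(\theta)\cap I_2(\theta)\neq\emptyset$.

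The intermediate value step closes the argument. Let $S(\theta)$ denote the sum of the two endpoints of $I_1(\theta)$, which is continuous in $\theta$ and $\pi$-periodic since $I_1(\theta+\pi)=I_1(\theta)$. The analogous quantity for $I_2$ is $S(\theta+\pi/2)$, because $p_2(\theta)=p_1(\theta+\pi/2)$; hence the continuous function $\psi(\theta):=S(\theta)-S(\theta+\pi/2)$ satisfies $\psi(\theta+\pi/2)=-\psi(\theta)$ and so must vanish at some $\theta_0$. At this angle the intervals $I_1(\theta_0)$ and $I_2(\theta_0)$ share the same midpoint and therefore overlap; choosing any $c\in I_1(\theta_0)\cap I_2(\theta_0)$, and then picking $a_{1,\pm}\in[h^-(\pm p_1(\theta_0)),h^+(\pm p_1(\theta_0))]$ with $a_{1,+}+a_{1,-}=c$ and $a_{2,\pm}$ analogously with $a_{2,+}+a_{2,-}=c$, yields four points $q_{i,\pm}\in K$ that project to the required square and satisfy the coplanarity condition.

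The only technical point I would expect to have to verify carefully is the continuity of $h^\pm$ on all of $K|L$, which follows from the fact that $h^+$ is concave and $h^-$ is convex on the convex body $K|L$ and that they coincide on its relative boundary. Apart from this, the argument is strictly simpler than the one for the hexagonal (symmetric) case of Lemma~\ref{l:2-dim hexagon}: only a single scalar equation must be solved rather than a triple-sequence construction, and the sign-flip identity for $\psi$ comes directly from the $\pi/2$-rotational symmetry of the square.
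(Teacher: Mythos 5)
Your reduction of the statement to finding one inscribed square with $a_{1,+}+a_{1,-}=a_{2,+}+a_{2,-}$ is exactly the reduction recorded in the remark following Theorem \ref{th:general3} (the paper leaves the proof of Lemma \ref{lem:general3} to the reader), and your rotation/intermediate-value scheme, based on the antisymmetry $\psi(\theta+\pi/2)=-\psi(\theta)$, is a legitimate and indeed slicker route than the bisection-plus-subsequence argument used for the symmetric Lemma \ref{l:2-dim hexagon}. The description of the attainable sums as the interval $I_i(\theta)$ and the ``common midpoint implies nonempty intersection'' step are correct.

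The one step you defer is, however, genuinely broken as you state it. The functions $h^{\pm}$ need \emph{not} be continuous on all of $K|L$, and they certainly need not coincide on $\relbd(K|L)$: for $K=[0,1]^3$ one has $h^+\equiv 1$ and $h^-\equiv 0$ on all of $K|L=[0,1]^2$, boundary included. Worse, continuity itself can fail at relative boundary points of $K|L$: for $K=\conv\bigl(\Ball_{2,L}\cup\{\e_1+\e_3\}\bigr)$ the fibre over any $p\in\relbd\Ball_{2,L}$ with $p\neq\e_1$ is the single point $p$, while the fibre over $\e_1$ is the segment $[\e_1,\e_1+\e_3]$, so $h^+$ jumps at $\e_1$. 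Since in the intended application one takes $\inr=\inr(K|L;L)$, the circle $\relbd(\inr B_{2,L})$ may meet $\relbd(K|L)$, so $\psi$ need not be continuous and the intermediate value theorem cannot be invoked as written. The gap is repairable: either (i) run your argument for every $\inr'<\inr$ --- then the circle of radius $\inr'$ lies in $\relint(K|L)$, where the finite concave function $h^+$ and the finite convex function $h^-$ \emph{are} continuous --- and let $\inr'\nearrow\inr$, extracting convergent subsequences of the eight lifted points inside the compact set $K$ (the scalar condition and the projections pass to the limit, and the dimension statement survives because the limiting projections still form a nondegenerate square); or (ii) replace the intermediate value theorem by the bisection-plus-compactness scheme of the proof of Lemma \ref{l:2-dim hexagon}, which uses only closedness and convexity of $K$ together with interpolation along the fibres, never continuity of the lifts. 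With either repair the proof is complete.
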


\begin{theorem}\label{th:general3}
Let $K\in\K^3$. Then
\[
\frac{\widetilde{\inr}_2(K)}{\inr_2(K)}\leq\sqrt{2}.
\]
The inequality is best possible.
\end{theorem}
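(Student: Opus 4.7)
The plan is to mirror the proof of Theorem \ref{th:inner_2}, substituting Lemma \ref{lem:general3} for Lemma \ref{l:2-dim hexagon}. The centrally inscribed hexagon is replaced by the inscribed square $H=\conv(\{\pm p_1,\pm p_2\})$, whose $2$-dimensional inradius inside a disc of radius $r$ equals $r/\sqrt{2}$ in place of $r\sqrt{3}/2$; this is what produces the constant $\sqrt{2}$ in the statement.

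First I would take $L\in\L^3_2$ realizing $\widetilde{\inr}_2(K)=\inr(K|L;L)$ and, after a rigid motion together with a translation of $K$ placing the incenter of $K|L$ at the origin (neither affects the radii), assume $L=\lin(\{\e_1,\e_2\})$ and $\widetilde{\inr}_2(K)\,B_{2,L}\subset K|L$. Lemma \ref{lem:general3} then provides the inscribed square $H$ together with four lifts $q_{i,\pm}\in K$, $i=1,2$, satisfying $q_{i,\pm}|L=\pm p_i$ and $\dim\conv(\{q_{i,\pm}:i=1,2\})=2$. Writing $C:=\conv(\{q_{i,\pm}:i=1,2\})$ and $L':=\aff(C)$, the chain
\[
\inr_2(K)\geq\inr(K\cap L';L')\geq\inr(C;L')
\]
reduces the theorem to the estimate $\inr(C;L')\geq\inr(H;L)=\widetilde{\inr}_2(K)/\sqrt{2}$.

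The main obstacle, and the only real divergence from the proof of Theorem \ref{th:inner_2}, is that $C$ need no longer be centrally symmetric, so the origin may fail to lie in $C$ and cannot directly serve as center of an inscribed disc. I would bypass this by replacing the origin with the centroid $c:=\tfrac{1}{4}(q_{1,+}+q_{1,-}+q_{2,+}+q_{2,-})$ of the four vertices of $C$. Since $p_1+(-p_1)+p_2+(-p_2)=0$, its projection $c|L$ is the origin; and the equality $C|L=H$---a genuine square with four extreme vertices---forces $C$ to be a proper convex quadrilateral with the $q_{i,\pm}$ in convex position, so the strictly positive convex combination $c$ lies in $\relint C$. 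For $x_0\in\relbd C$ minimising $|x_0-c|_2$, the point $x_0=\lambda q_{i,\pm}+(1-\lambda)q_{j,\pm}$ sits on an edge of $C$, its projection $(x_0-c)|L$ is the corresponding convex combination of two adjacent vertices of $H$ and hence lies on $\relbd H$, and the $1$-Lipschitz property of orthogonal projection yields
\[
|x_0-c|_2\geq\bigl|(x_0-c)|L\bigr|_2\geq\min_{y\in\relbd H}|y|_2=\inr(H;L)=\widetilde{\inr}_2(K)/\sqrt{2}.
\]
As the disc of radius $|x_0-c|_2$ centered at $c$ is contained in $C$, this supplies the required bound on $\inr(C;L')$.

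For sharpness I would analyse the tetrahedra $K_\varepsilon:=\conv(\{\pm\e_1,\,\pm\e_2+\varepsilon\e_3\})\in\K^3$ with $\varepsilon>0$ small. The projection $K_\varepsilon|\lin(\{\e_1,\e_2\})$ is exactly the square $\conv(\{\pm\e_1,\pm\e_2\})$ inscribed in the unit disc, giving $\widetilde{\inr}_2(K_\varepsilon)\geq 1/\sqrt{2}$, while the horizontal cross-section at height $\varepsilon/2$ is the square $\{(x,y,\varepsilon/2):|x|,|y|\leq 1/2\}$ of inradius $1/2$. A case analysis of the remaining planar sections of $K_\varepsilon$---the four congruent triangular faces, whose inradius tends to $\sqrt{2}-1<1/2$, the other horizontal rectangles of sides $2(1-t)\times 2t$ and inradius $\min(t,1-t)\leq 1/2$, and the two diagonal mid-parallelogram sections, which are rhombi with a short diagonal of order $\varepsilon$---shows that $\inr_2(K_\varepsilon)=1/2$. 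Combined with the inequality just proved, this forces $\widetilde{\inr}_2(K_\varepsilon)=1/\sqrt{2}$ and hence $\widetilde{\inr}_2(K_\varepsilon)/\inr_2(K_\varepsilon)=\sqrt{2}$.
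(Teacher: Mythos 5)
The paper leaves this theorem (and Lemma \ref{lem:general3}) to the reader, indicating only the intended route---mirror the proof of Theorem \ref{th:inner_2} with the inscribed square in place of the hexagon---and the extremal tetrahedron; your argument follows exactly that route and is correct, your centroid computation being in effect a hand-made instance of Lemma \ref{l:tildeinr} applied to the planar quadrilateral $C$ (which would give $\inr(C;\aff(C))\geq\inr(C|L;L)=\inr(H;L)$ directly). The only soft spot is the sharpness verification: your list of planar sections of $K_\varepsilon$ (faces, horizontal rectangles, diagonal rhombi) is not exhaustive, so ruling out arbitrary tilted planes still requires an argument---though the paper itself asserts the equality case in its remark without any proof.
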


\begin{remark}
In order to prove Lemma \ref{lem:general3}, it would be sufficient
to find an inscribed square, s.t.~the segments $[q_{1,+},q_{1,-}]$ and $[q_{2,+},q_{2,-}]$
intersect in their midpoints, i.e., if $(q_{1,+})_3+(q_{1,-})_3=(q_{2,+})_3+(q_{2,-})_3$.

Equality holds in Theorem \ref{th:general3} for a simplex
with vertices
\[
\left(\pm 1,0,\varepsilon\right)^\intercal\quad\text{and}\quad\left(0,\pm 1,-\varepsilon\right)^\intercal,
\]
for small enough $\varepsilon>0$.

Let us also remark that doing the same as in Theorem \ref{th:0symm3dim},
for $K\in\K^3$, i.e.~applying Theorem \ref{th:general3} and Proposition 2.1 in \cite{Go},
would imply that
\[
\frac{\cir_2(K)}{\inr_2(K)}=\frac{\cir_2(K)}{\widetilde{\inr}_2(K)}
\frac{\widetilde{\inr}_2(K)}{\inr_2(K)}\leq 3,
\]
still worse than the best known bound 2.151.
\end{remark}

\section{Improved general upper bounds}\label{s:3}

In the proof of Theorem \ref{th:Rn-1r2}, we extend some ideas
of Perel'man \cite{P}, slightly modifying some steps.

\begin{proof}[Proof of Theorem \ref{th:Rn-1r2}]
After a suitable translation of $K$, we can suppose that the diameter of
$K$ is given by $\D(K)=2\abs{p}_2$ for $p,-p\in K$. Let $p_1,p_2\in
K|p^{\bot}$ be such that $\abs{p_1-p_2}_2=\D(K|p^{\bot})$. We are going to
prove that
\begin{equation}\label{eq:D<4r2}
\D(K|p^{\bot})\leq 4\inr_2(K).
\end{equation}
So, we assume the contrary, $\D(K|p^{\bot})>4\inr_2(K)$, and we will get a
contradiction. Let $q_1,q_2\in K$ be such that $q_j|p^{\bot}=p_j$, for
$j=1,2$, and we write

\[
P=\conv\left(\left\{\frac{1}{2}(p+q_j),\frac{1}{2}(-p+q_j):j=1,2\right\}\right)\subset
K.
\]

We first observe that $P$ is a (2-dimensional) parallelogram, because
\begin{align}
\frac{1}{2}(p+q_1) & -\frac{1}{2}(p+q_2)
    =\frac{1}{2}(q_1-q_2)=\frac{1}{2}(-p+q_1)-\frac{1}{2}(-p+q_2)\quad\text{ and}\nonumber\\[1mm]
\frac{1}{2}(p+q_1) & -\frac{1}{2}(-p+q_1)
    =p=\frac{1}{2}(p+q_2)-\frac{1}{2}(-p+q_2),\label{RS eq:P_2-parallel}
\end{align}
and since $P$ is a 0-symmetric convex body, $\inr(P;\aff(P))=\omega(P;\aff(P))/2$.

Next we compute the width $\omega(P;\aff(P))$. Let $h,h'$ denote the
heights of the parallelogram $P$ corresponding to the edges
$\bigl[(p+q_1)/2,(p+q_2)/2\bigr]$ and $\bigl[(p+q_1)/2,(-p+q_1)/2\bigr]$,
respectively. 
From \eqref{RS eq:P_2-parallel} we get, on the one hand, that $h$ is just the distance
between the orthogonal projections onto $p^{\bot}$ of the points
$(p+q_1)/2$ and $(p+q_2)/2$, i.e., the distance between $p_1/2$ and
$p_2/2$. Thus, $h'=\abs{p_1-p_2}_2/2=\D(K|p^{\bot})/2$. On the other hand,
since
\[
\frac{\abs{\frac{p+q_1}{2}-\frac{-p+q_1}{2}}_2}{h}=\frac{\abs{\frac{p+q_1}{2}-\frac{p+q_2}{2}}_2}{h'},
\]
then we have
\[
h=\frac{2h'\abs{p}_2}{\abs{q_1-q_2}_2}=\frac{h'\,\D(K)}{\abs{q_1-q_2}_2}\geq
h',
\]
where the inequality comes from the fact that $q_1,q_2\in K$ and then
$\abs{q_1-q_2}_2\leq\D(K)$. Therefore
\[
\omega(P;\aff(P))=\min\{h,h'\}=h'=\frac{\D(K|p^{\bot})}{2},
\]
and hence
\[
\inr(K\cap\aff(P);\aff(P))\geq\inr(P;\aff(P))=\frac{\omega(P;\aff
(P))}{2}=\frac{\D(K|p^{\bot})}{4}>\inr_2(K),
\]
a contradiction.

This shows \eqref{eq:D<4r2}, and then, applying Jung's inequality \eqref{eq:Jung}
to the ($n-1$)-dimensional convex body $K|p^{\bot}$, we finally
get that
\[
\cir_{n-1}(K)\leq\cir(K|p^{\bot})\leq\sqrt{\frac{n-1}{2n}}\D(K|p^{\bot})\leq
2\sqrt{2}\sqrt{\frac{n-1}{n}}\inr_2(K).
\]
\end{proof}


For the proof of Theorem \ref{th:R2rn-1}, we need to remember (see \cite{GK}) that for
every $K\in\K^n$, there exist $x,y\in K$ s.t.
\[
\omega(K)=\omega(K|\aff([x,y]);\aff([x,y]))=\omega\left(K,\frac{x-y}{|x-y|_2}\right)=|x-y|_2.
\]
\begin{proof}[Proof of Theorem \ref{th:R2rn-1}]
After a suitable rigid motion of $K$, we can suppose that
$\pm(\omega(K)/2)\e_2\in K$ and $K$ is contained
between the parallel supporting hyperplanes $\pm(\omega(K)/2)\e_2+\e^{\bot}_2$.
Our aim is to show that
$\omega(K\cap\e_2^{\bot};\e_2^{\bot})\geq(1/\sqrt{2})\cir_2(K)$.
After rotating $K$ around $\lin(\{\e_2\})$, we can furthermore
assume that $\omega(K\cap\e_2^{\bot};\e_2^{\bot})=|x-y|_2$, with $x,y\in K\cap\e_2^{\bot}$ and $x-y\in\lin(\{\e_1\})$.
Moreover, let $L_x,L_y\in\L^{n-1}_{n-2}$ be two parallel supporting $(n-2)$-planes
of $K\cap\e_2^{\bot}$ in $x$ and $y$; respectively, s.t.~$L_x,L_y\subset\e_2^{\bot}$.
Then, there exist $H_x,H_y\in\L^n_{n-1}$ two (non-necessarily parallel) supporting
hyperplanes of $K$ in $x$ and $y$, respectively, and s.t.~$L_x\subset H_x$ and $L_y\subset H_y$.
Therefore, the outer normals of $K$ in $x$ and $y$ are vectors
$a_1\e_1+a_2\e_2$ and $b_1\e_1+b_2\e_2$, respectively, where $a_1,b_1,a_2,b_2\in\R$.
Let us denote $\omega:=\omega(K)$ and $\omega':=\omega(K\cap\e_2^{\bot};\e_2^{\bot})$.

We observe that $K|\lin(\{\e_1,\e_2\})$ is contained in the
trapezoid determined by the hyperplanes
\begin{equation}\label{eq:trapezoid}
\pm(\omega/2)\e_2+\lin(\{\e_1\}),\quad x+(a_1\e_1+a_2\e_2)^{\bot},\quad y+(b_1\e_1+b_2\e_2)^{\bot}.
\end{equation}
Moreover, let $a\e_1:=x|\lin(\{\e_1,\e_2\})$ and $-b\e_1:=y|\lin(\{\e_1,\e_2\})$, $a,b\geq 0$,
and $a+b=\omega'$.

We now show that $K|\lin(\{\e_1,\e_2\})$ is contained on the left hand side of the line $2a\e_1+\lin(\{\e_2\})$.
Indeed, the supporting line $a\e_1+(a_1\e_1+a_2\e_2)^{\bot}$ hits
$\pm(\omega/2)\e_2+\lin(\{\e_1\})$ in $(a\pm t)\e_1\pm(\omega/2)\e_2$, respectively, for some $t\in\R$.
Moreover, since $\pm(\omega/2)\e_2\in K$, then $a\pm t\geq 0$,
from which $t\in[-a,a]$. Since $K|\lin(\{\e_1,\e_2\})$
is contained in the trapezoid given by the lines \eqref{eq:trapezoid},
the most-right point is given by one of the vertices $(a\pm t)\e_1\pm(\omega/2)\e_2$,
and whose first coordinate is bounded by $a\pm t\leq 2a$, proving the assertion.
By an analogous argument, $K|\lin(\{\e_1,\e_2\})$ is on the right hand side of the line
$-2b\e_1+\lin(\{\e_2\})$.

This shows that $K|\lin(\{\e_1,\e_2\})$ is contained
in a box of length $\omega$ in the direction $\e_2$
and length $2a+2b=2\omega'$ in the direction $\e_1$.
This immediately implies that $\omega\leq 2\omega'$ (otherwise,
$\omega(K|\lin(\{\e_1\});\lin(\{\e_1\}))\leq 2\omega'<\omega$, a contradiction).
Since the circumradius of this box is $\sqrt{(\omega')^2+(\omega/2)^2}$,
then
\[
\cir(K|\lin(\{\e_1,\e_2\}))\leq\sqrt{(\omega')^2+(\omega/2)^2}\leq\sqrt{2}\omega'.
\]
Moreover, since $\lin(\{\e_1,\e_2\})\in\L^n_2$, then
$\cir_2(K)\leq\cir(K|\lin(\{\e_1,\e_2\}))$,
which together with the above, finally shows that
\begin{equation}\label{eq:R_2}
\cir_2(K)\leq\sqrt{2}\omega'.
\end{equation}

By Steinhagen's inequality \eqref{eq:Steinhagen} applied to $K\cap\e_2^{\bot}$,
and since $\sqrt{n-1},n/\sqrt{n+1}\leq \sqrt{n}$, then
\[
\omega'=\omega(K\cap\e_2^{\bot};\e_2^{\bot})\leq 2\sqrt{n}\inr(K\cap\e_2^{\bot};\e_2^{\bot}).
\]
This, together with \eqref{eq:R_2}, imply that
$\cir_2(K)\leq 2\sqrt{2}\sqrt{n}\inr_{n-1}(K)$, concluding the proof.
\end{proof}

It is not clear whether Theorem \ref{th:Rn-1r2} or Theorem \ref{th:R2rn-1}
induce for $n\geq 4$ tight inequalities or not.

\section{Perel'man's inequality}\label{s:4}

This section is devoted to show a comprehensive proof of
Perel'man's inequality $\cir_2(K)/\inr_2(K)\leq 2.151$. Since it uses
some hidden results, we establish them here. Some of them are well-known,
while others cannot be found in the literature.

Santal\'o in \cite{Sa}, his famous work on complete systems of
inequalities, proved that
\begin{equation}\label{eq:Santalo}
2R(K) \left(2R(K) + \sqrt{4R(K)^2-D(K)^2}\right)r(K) \ge D(K)^2\sqrt{4R(K)^2-D(K)^2},
\end{equation}
for any $K\in\K^2$. Moreover, equality holds if and only if $K$ is an
isosceles triangle, with two longer sides of equal length.

Next result is a characterization by touching points for the circumradius
of $K$. Remember that we address here the Euclidean case, but this characterization
is well-known even when the ball is an arbitrary convex body (c.f.~\cite{BrKo}).
\begin{proposition}\label{prop:ocuh}
Let $K\in\K^n$ be s.t.~$K\subset\Ball_n$. The following are equivalent:
\begin{itemize}
\item $\cir(K)=1$.
\item There exist $p^1,\dots,p^j\in K\cap\bd\Ball_n$, $2\leq j\leq n+1$,
s.t.~$0\in\conv(\{p^1,\dots,p^j\})$. In particular, $\cir(\conv(\{p^1,\dots,p^j\}))=1$.
\end{itemize}
\end{proposition}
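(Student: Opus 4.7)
The plan is to prove the two directions separately, with the easy direction handling both the converse implication and the ``in particular'' clause, and the hard direction using a standard separation-plus-perturbation argument.

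\textbf{Easy direction.} First I would show that if there exist $p^1,\dots,p^j\in K\cap\bd\Ball_n$ with $0\in\conv(\{p^1,\dots,p^j\})$, then $\cir(K)=1$. Since $K\subset\Ball_n$ we already have $\cir(K)\leq 1$. Writing $0=\sum\lambda_ip^i$ with $\lambda_i\geq 0$ and $\sum\lambda_i=1$, for any candidate center $x\in\R^n$ with $K\subset x+r\Ball_n$ I would use $|p^i|_2=1$ to rewrite the containment as
\[
|p^i-x|_2^2 = 1-2\esc{x}{p^i}+|x|_2^2\leq r^2,\quad i=1,\dots,j.
\]
Taking the convex combination with weights $\lambda_i$ and using $\sum\lambda_ip^i=0$, the cross term vanishes and I obtain $1+|x|_2^2\leq r^2$, so $r\geq 1$. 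Thus $\cir(K)=1$. This argument applied to $\conv(\{p^1,\dots,p^j\})\subset K\cap\Ball_n$ also yields $\cir(\conv(\{p^1,\dots,p^j\}))=1$, establishing the ``in particular'' assertion.

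\textbf{Hard direction.} For the forward implication I would argue by contradiction: assume $\cir(K)=1$ but $0\notin\conv(K\cap\bd\Ball_n)$. By the separation theorem there exists $v\in\R^n\setminus\{0\}$ with $\esc{v}{p}>0$ for every $p\in K\cap\bd\Ball_n$. By compactness of $K\cap\bd\Ball_n$ and continuity, there is $\delta>0$ with $\esc{v}{p}\geq\delta$ on $K\cap\bd\Ball_n$, and an open neighborhood $U$ of $K\cap\bd\Ball_n$ in $K$ where $\esc{v}{p}\geq\delta/2$. On the complementary compact set $K\setminus U$ we have $|p|_2<1$ strictly, so $\max_{p\in K\setminus U}|p|_2=:M<1$. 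Then for $t>0$ small enough,
\[
|p-tv|_2^2\leq 1-t\delta+t^2|v|_2^2<1 \text{ on }U,\qquad |p-tv|_2\leq M+t|v|_2<1\text{ on }K\setminus U,
\]
so $K-tv\subset\inter\Ball_n$, contradicting $\cir(K)=1$. Hence $0\in\conv(K\cap\bd\Ball_n)$.

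\textbf{Cardinality bounds.} By Carathéodory's theorem applied in $\R^n$, the point $0$ is a convex combination of at most $n+1$ points of $K\cap\bd\Ball_n$, giving $j\leq n+1$. The bound $j\geq 2$ holds because a single point of $\bd\Ball_n$ has norm $1\neq 0$, so it cannot coincide with the origin. The main obstacle is the perturbation step in the hard direction: the uniform positivity of $\esc{v}{p}$ must be upgraded to a uniform estimate that survives both on a neighborhood of the contact set and on its complement, but this is handled cleanly by the compactness splitting described above.
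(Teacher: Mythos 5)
Your proof is correct. Note that the paper itself does not prove Proposition~\ref{prop:ocuh}: it treats the characterization as well known (even for arbitrary convex containers in place of $\Ball_n$) and simply refers to \cite{BrKo}, so there is no in-text argument to compare yours against. Your two directions are the standard ones and both are sound. For the backward implication, averaging the identity $\abs{p^i-x}_2^2=1-2\esc{x}{p^i}+\abs{x}_2^2$ against the weights of the convex combination representing $0$ gives $1+\abs{x}_2^2\leq r^2$ for any enclosing ball $x+r\Ball_n$, which settles both that implication and the ``in particular'' clause in one stroke. For the forward implication, the separation-plus-perturbation argument with the compactness splitting of $K$ into a relative neighborhood of the contact set and its complement correctly shows that if $0\notin\conv(K\cap\bd\Ball_n)$ then some small translate $K-tv$ lies in the open unit ball, whence $\cir(K)<1$. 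Two points are worth making explicit: the degenerate case $K\cap\bd\Ball_n=\emptyset$ (then $\max_{p\in K}\abs{p}_2<1$ gives $\cir(K)<1$ directly, so the contradiction still goes through), and the fact that $\conv(K\cap\bd\Ball_n)$ is compact in $\R^n$, which is what licenses strict separation with a uniform $\delta>0$. Carath\'eodory then gives $j\leq n+1$, and $j\geq 2$ is clear since a single unit vector cannot be the origin.
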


\begin{lemma}\label{l:tildeinr}
Let $K\in\K^i$ be embedded in $\R^n$, and let $L\in\L^n_i$, where $1\leq i\leq n$. Then
$\inr(K|L;L)\leq\inr(K;\aff(K))$.
\end{lemma}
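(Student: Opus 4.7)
The plan is to compare $K$ with its projection via the restriction of the orthogonal projection $\pi\colon\R^n\to L$ to $\aff(K)$. Since both $\aff(K)$ and $L$ have dimension $i$, the affine map $\pi|_{\aff(K)}\colon\aff(K)\to L$ is either singular or an affine bijection. I would dispose of the singular case first: if $\pi|_{\aff(K)}$ is not injective, then $K|L=\pi(K)$ has dimension strictly less than $i$, so $\inr(K|L;L)=0$ and the inequality holds trivially.

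In the nonsingular case, let $\phi\colon L\to\aff(K)$ denote the affine inverse of $\pi|_{\aff(K)}$. The key observation is that $\phi$ is an expansion, i.e.\ $|\phi(u)-\phi(v)|_2\ge|u-v|_2$ for all $u,v\in L$. This is immediate from the fact that $\pi$ is an orthogonal projection (hence nonexpansive): if $y_j=\phi(u_j)\in\aff(K)$, then $u_1-u_2=\pi(y_1-y_2)$, and $|\pi(y_1-y_2)|_2\le|y_1-y_2|_2$ gives the claim. Equivalently, the singular values of $\phi$ (as a linear map on directions) are all at least $1$.

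Next I would show $\phi(K|L)\subseteq K$. Given $u\in K|L$, pick $q\in K$ with $\pi(q)=u$; since $K\subseteq\aff(K)$ and $\pi|_{\aff(K)}$ is injective, $q$ is the unique preimage of $u$ in $\aff(K)$, i.e.\ $q=\phi(u)\in K$. Now take any Euclidean ball $B(c,r)\subset K|L$ realizing the inradius of $K|L$ inside $L$. Because $\phi$ is an affine expansion between $i$-dimensional Euclidean spaces, $\phi(B(c,r))$ is an $i$-dimensional ellipsoid in $\aff(K)$ all of whose semi-axes are at least $r$, hence it contains the ball $B(\phi(c),r)$ measured in $\aff(K)$. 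Combining,
\[
B(\phi(c),r)\subseteq\phi(B(c,r))\subseteq\phi(K|L)\subseteq K,
\]
so $\inr(K;\aff(K))\ge r=\inr(K|L;L)$, as desired.

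The only delicate step is the expansion-maps-ball-to-ellipsoid-containing-ball fact; the rest is bookkeeping. I would verify it by diagonalizing the linear part of $\phi$ via the singular value decomposition and noting that each semi-axis scales by the corresponding singular value $\sigma_j\ge 1$. Everything else is immediate from the contractive nature of orthogonal projection and the uniqueness of lifts through an injective affine map between equidimensional subspaces.
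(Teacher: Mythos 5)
Your proof is correct and follows essentially the same route as the paper's: both arguments reduce to the fact that the orthogonal projection restricted to $\aff(K)$ is (in the nondegenerate case) an affine bijection onto $L$ whose inverse is non-contractive, so the lifted inscribed ball is an ellipsoid in $\aff(K)$ containing a ball of the same radius. The paper phrases this in coordinates, taking the center to be the midpoint $p^0=\tfrac12(p^u+p^{-u})$ of antipodal lifts and verifying $|p^0-p^u|_2\ge\inr(K|L;L)$ directly, but the mechanism is identical to your singular-value argument.
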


\begin{proof}
Let us define $\inr:=\inr(K|L;L)$. After a suitable rigid motion of $K$, we can suppose that
$L=\lin(\{\e_1,\dots,\e_i\})$ and $\inr\Ball_{i,L}\subset K|L$. Furthermore, for
every $u\in\relbd(\inr\Ball_{i,L})$, there exist $p^u_{i+1},\dots,p^u_n\in\R$, s.t.
\[
p^u:=u+(0,\dots,0,p^u_{i+1},\dots,p^u_n)\in K.
\]

Moreover, for the point $p:=(1/2)(p^u+p^{-u})\in K$, with $u\in\relbd(\inr\Ball_{i,L})$, we have that
$p|L=0$.

If $\inr=0$ or $\dim(K|L)<i$, the assertion immediately follows, thus let us assume $\inr>0$ and $\dim(K|L)=i$.
For every $q\in K|L$, let $p_q\in K$ be s.t.~$p_q|L=q$, and observe that $(p^q+p^{-q})|L=0$
yields $p^0=(1/2)(p^q+p^{-q})$, for every $q\in\relint(\inr\Ball_{i,L})$. Therefore,
\[
|p^0-p^u|_2^2=|u|_2^2+|p^0_{i+1}-p^u_{i+1}|^2+\cdots+|p^0_n-p^u_n|^2\geq|u|_2^2=\inr^2,
\]
for every $u\in\relbd(\inr\Ball_{i,L})\subset K|L$, hence $(p^0+\inr\Ball_{i,\aff(K)})\subset K$,
and thus we conclude that $\inr(K;\aff(K))\geq\inr$, finishing the lemma.
\end{proof}

Next corollary is the analogous statement to Lemma 3.1 in \cite{Go}
(and Lemma \ref{l:2-dim hexagon} and Theorem \ref{th:general3}, too)
when $K$ is not necessarily symmetric, and bounds $\widetilde{\inr}_i(K)$
from above in terms of $\inr_i(K)$.
\begin{corollary}\label{cor:inner22}
Let $K\in\K^n$ and $1\leq i\leq n$. Then
$\widetilde{\inr}_i(K)\leq i\inr_i(K)$.
The inequality is best possible when $i=1$.
\end{corollary}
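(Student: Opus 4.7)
The plan is to reduce the inequality to an application of Lemma \ref{l:tildeinr} by lifting an inscribed regular simplex through the projection.

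First I would let $r=\widetilde{\inr}_i(K)$ and pick $L\in\L^n_i$ realizing this inner radius, so that after a rigid motion we may assume $L=\lin(\{\e_1,\dots,\e_i\})$ and $r\Ball_{i,L}\subset K|L$. Inscribe a regular $i$-simplex $T=\conv(\{p_1,\dots,p_{i+1}\})$ in the disc $r\Ball_{i,L}$, so that its vertices lie on $\relbd(r\Ball_{i,L})$, its circumradius (inside $L$) equals $r$, and its inradius (inside $L$) equals $r/i$; this is the standard fact that the inradius-to-circumradius ratio of a regular $i$-simplex is $1/i$.

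Next I would lift each vertex: since $p_j\in K|L$, there exists $q_j\in K$ with $q_j|L=p_j$. Set $S=\conv(\{q_1,\dots,q_{i+1}\})$. The key observation is that $S$ is genuinely $i$-dimensional: any affine dependence $\sum\lambda_jq_j=0$ with $\sum\lambda_j=0$ would project to an affine dependence $\sum\lambda_jp_j=0$ among the $p_j$, contradicting the affine independence of the vertices of a non-degenerate simplex $T$. Writing $\aff(S)=x+L'$ with $L'\in\L^n_i$ and $x\in L'^{\bot}$, we then have $S\subset K\cap(x+L')$, so
\[
\inr_i(K)\;\geq\;\inr(K\cap(x+L');x+L')\;\geq\;\inr(S;\aff(S)).
\]

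To finish, note that $S|L=T$ and apply Lemma \ref{l:tildeinr} (with $S$ in the role of the ambient convex body) to obtain
\[
\inr(S;\aff(S))\;\geq\;\inr(S|L;L)\;=\;\inr(T;L)\;=\;\frac{r}{i}\;=\;\frac{\widetilde{\inr}_i(K)}{i},
\]
which yields $\widetilde{\inr}_i(K)\leq i\,\inr_i(K)$. For the sharpness claim at $i=1$, there is nothing to prove: one has $\widetilde{\inr}_1(K)=\inr_1(K)=\D(K)/2$ for every $K$, so equality is automatic.

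The only subtle step is the dimension count for $S$, which could a priori fail if the lifts $q_j$ collapsed; the projection argument above rules this out for free. I expect no further obstacles, as the rest is a direct composition of the regular-simplex ratio with Lemma \ref{l:tildeinr}.
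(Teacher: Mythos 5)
Your proposal is correct and follows essentially the same route as the paper's proof: inscribe a regular $i$-simplex in the disc $\widetilde{\inr}_i(K)\Ball_{i,L}$, lift its vertices into $K$, apply Lemma \ref{l:tildeinr} to the lifted simplex, and use the ratio $\inr/\cir=1/i$ for the regular $i$-simplex. Your explicit verification that the lifted simplex is $i$-dimensional is a small but welcome addition that the paper leaves implicit.
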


\begin{proof}
After a suitable rigid motion we can suppose that there exists
$L\in\L^n_i$ such that
\[
\widetilde{\inr}_i(K)B_{i,L}\subset K|L.
\]
We take points $p_1,\dots,p_{i+1}\in\relbd(\widetilde{\inr}_i(K)B_{i,L})$
being the vertices of an $i$-dimensional regular simplex of $L$,
$\Simp_i=\conv(\{p_j:j=1,\dots,i+1\})$. There exist points $q_1,\dots,q_{i+1}\in
K$ such that $q_j|L=p_j$, $j=1,\dots,i+1$, and we define
$\Simp_i'=\conv(\{q_j:j=1,\dots,i+1\})\subset K$. By Lemma \ref{l:tildeinr},
we have that $\inr(\Simp_i';\aff(\Simp_i'))\geq\inr(\Simp_i'|L;L)=\inr(\Simp_i;L)$.
Since $\Simp_i$ is an $i$-dimensional regular simplex, then $\cir(\Simp_i;\aff(\Simp_i))=i\inr(\Simp_i;\aff(\Simp_i))$,
and hence
\[
\widetilde{\inr}_i(K)=\cir(\Simp_i;\aff(\Simp_i))=
i\inr(\Simp_i;\aff(\Simp_i))\leq i\inr(\Simp_i';\aff(\Simp_i')).
\]
Observe that $\Simp_i'\subset K$ implies $\inr(\Simp_i';\aff(\Simp_i'))\leq\inr(K\cap\aff(\Simp_i');\aff(\Simp_i'))\leq\inr_i(K)$,
because $\aff(\Simp_i')$ is an $i$-dimensional affine subspace, and therefore we conclude
$\widetilde{\inr}_i(K)\leq i\inr_i(K)$.
\end{proof}

\begin{proposition}\label{prop:perelman}
Let $K\in\K^3$. Then $\cir_2(K)/\inr_2(K)\leq 2.151$.
\end{proposition}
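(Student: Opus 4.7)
The plan is to combine Santal\'o's inequality \eqref{eq:Santalo} with the touching-point characterization of the circumradius (Proposition \ref{prop:ocuh}) and the inner-radius transfer given by Lemma \ref{l:tildeinr} and Corollary \ref{cor:inner22}. First I would normalize so that $\inr_2(K)=1$ and aim to establish $\cir_2(K)\leq 2.151$. Let $L\in\L^3_2$ realize $\cir_2(K)=\cir(K|L)=:R$, and write $K'=K|L$, $D=\D(K')$, $r=\inr(K';L)$. By the very definition of $\widetilde{\inr}_2$ and by Corollary \ref{cor:inner22} one already has $r\leq\widetilde{\inr}_2(K)\leq 2$, while Theorem \ref{th:general3} even yields the sharper bound $r\leq\sqrt{2}$.

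Next, I would apply Proposition \ref{prop:ocuh} to the rescaled planar body $K'/R$ to single out two or three boundary points $p_1,\dots,p_j\in K'\cap\relbd(R\Ball_{2,L})$ with $0\in\conv(\{p_1,\dots,p_j\})$, and lift them to points $q_1,\dots,q_j\in K$ satisfying $q_i|L=p_i$. In the three-point case, $T=\conv(\{q_1,q_2,q_3\})\subset K$ is generically a 2-dimensional triangle whose projection $T|L=\conv(\{p_1,p_2,p_3\})$ is a triangle inscribed in the $R$-circle and containing $0$. Lemma \ref{l:tildeinr} then yields
\[
\inr_2(K)\geq\inr(K\cap\aff T;\aff T)\geq\inr(T;\aff T)\geq\inr(T|L;L),
\]
and Santal\'o's inequality \eqref{eq:Santalo}, which holds with equality for the triangle $T|L$, converts this into a concrete algebraic relation between $R$ and $\D(T|L)$. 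In the two-point case one has $\D(K')=2R$, so $\D(K)\geq 2R$; I would then invoke the parallelogram construction from the proof of Theorem \ref{th:Rn-1r2} (applied with $p$ suitably chosen so as to capture the geometry at the antipodal touching points) to extract an inscribed 2-dimensional parallelogram $P\subset K$ whose half-width lower bounds $\inr_2(K)$, and thereby bound $R$ directly.

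Finally, I would combine the preceding estimates, Jung's two-dimensional inequality $R\leq\D(K')/\sqrt{3}$, and Santal\'o's inequality \eqref{eq:Santalo} applied to $K'$ itself into a single constrained optimization problem in the variables $(R,D)$, taking the worse of the two cases. The main obstacle I expect is the numerical optimization at the end: the constant $2.151$ is not a clean closed form but arises as the largest root of an algebraic equation coming from the equality case of Santal\'o's inequality (which is attained by isosceles triangles). One must carefully identify the extremal configuration, check that the degenerate sub-cases (e.g.~when $T$ fails to be 2-dimensional) do not produce a worse ratio, and numerically verify that the resulting bound does not exceed $2.151$. I expect the extremal body to be close to a 3-dimensional analog of an isosceles triangle, in accordance with the equality case of Santal\'o's inequality.
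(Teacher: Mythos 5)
Your overall toolkit (Santal\'o's inequality, Proposition \ref{prop:ocuh}, Lemma \ref{l:tildeinr}) matches the paper's, but there is a genuine gap at the heart of your argument: you never obtain an upper bound on the diameter of the projected body in terms of $\inr_2(K)$, and without one the Santal\'o step collapses. Normalizing $\inr_2(K)=1$, Santal\'o's inequality gives a \emph{lower} bound on the inradius of the inscribed triangle $T|L$ of the form $\inr(T|L;L)\geq f(R,D)$ with $f(R,D)=D^2\sqrt{4R^2-D^2}\,\bigl(2R(2R+\sqrt{4R^2-D^2})\bigr)^{-1}$, where $D=\D(T|L)\in[\sqrt{3}R,2R]$. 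As $D\to 2R$ this lower bound tends to $0$, so the chain $1=\inr_2(K)\geq\inr(T;\aff T)\geq\inr(T|L;L)\geq f(R,D)$ puts no constraint on $R$ unless $D$ is bounded \emph{above} by a multiple of $\inr_2(K)$. The auxiliary facts you list do not supply such a bound: Jung's planar inequality bounds $D$ from below, the estimate $\inr(K|L;L)\leq\sqrt{2}$ combined with Santal\'o again only constrains $R$ when $D$ stays away from $2R$, and the two-point/three-point dichotomy does not help either, since a three-point configuration can have $D$ arbitrarily close to $2R$. The constrained optimization you describe is therefore unbounded and cannot produce $2.151$.

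The paper closes exactly this gap by \emph{not} projecting onto the plane realizing $\cir_2(K)$, but onto $p^{\bot}$ where $2|p|_2=\D(K)$: the parallelogram construction from the proof of Theorem \ref{th:Rn-1r2} (which you invoke only in a sub-case, and for a different purpose) yields the key inequality \eqref{eq:D<4r2}, namely $\D(K|p^{\bot})\leq 4\inr_2(K)$, valid precisely because the projection direction is a diameter of $K$. Feeding the bound $\D(\Simp)\leq\D(K|p^{\bot})\leq 4\inr_2(K)$ into the normalized Santal\'o inequality $y\geq\sqrt{2}\sqrt{x+1+\sqrt{1-2x}}$, together with $\inr(\Simp;p^{\bot})\leq\inr_2(K)$ from Lemma \ref{l:tildeinr} and the monotonicity of that expression in $x$, produces the closed inequality $\sqrt{2}\sqrt{t+1+\sqrt{1-2t}}\leq 4t$ for $t=\inr_2(K)/\cir(\Simp)$, whose solution gives the constant $2.151$; one concludes via $\cir_2(K)\leq\cir(K|p^{\bot})=\cir(\Simp)$. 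To repair your proof you would need to replace your choice of $L$ by $p^{\bot}$ and establish (or import) \eqref{eq:D<4r2} before invoking Santal\'o. A minor additional slip: equality in \eqref{eq:Santalo} holds only for isosceles triangles with the two longer sides equal, not for every triangle $T|L$.
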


\begin{proof}
After a suitable translation of $K$, we can suppose
that $0,p\in K$ are s.t.~$\D([0,p])=\D(K)$. In the proof
of Theorem \ref{th:Rn-1r2} we showed (see \eqref{eq:D<4r2})
that $\D(K|p^{\bot})\leq 4\inr_2(K)$.

Using Proposition \ref{prop:ocuh}, there exist points $p_1,p_2,p_3\in K|p^{\bot}$,
vertices of the simplex $\Simp:=\conv(\{p^1,p^2,p^3\})$,
s.t.~$\cir(\Simp)=\cir(K|p^{\bot})$. Since $\Simp\subset K|p^{\bot}$,
then $\D(\Simp)\leq\D(K|p^{\bot})$ and $\inr(\Simp;p^{\bot})\leq \inr(K|p^{\bot};p^{\bot})$.

Since $\Simp$ is planar, using \eqref{eq:Santalo}, we have that
\[
2\cir(\Simp) \left(2\cir(\Simp) + \sqrt{4\cir(\Simp)^2-\D(\Simp)^2}\right)\inr(\Simp;p^{\bot}) \ge \D(\Simp)^2\sqrt{4\cir(\Simp)^2-\D(\Simp)^2}.
\]
Now, we solve this inequality in $\D(\Simp)$. To do so, we normalize it in terms
of $x:=r(\Simp;p^{\bot})/\cir(\Simp)$ and $y:=\D(\Simp)/\cir(\Simp)$. The only sharp valid inequality, can be
easily found by using the fact that \eqref{eq:Santalo} reaches equality
for isosceles triangles:
\[
y\geq\sqrt{2}\sqrt{x+1+\sqrt{1-2x}}.
\]
Therefore, we derive that
\[
\sqrt{2}\sqrt{\frac{\inr(\Simp;p^{\bot})}{\cir(\Simp)}+1+\sqrt{1-2\frac{\inr(\Simp;p^{\bot})}{\cir(\Simp)}}}\leq
\frac{\D(\Simp)}{\cir(\Simp)}\leq \frac{\D(K|p^{\bot})}{\cir(\Simp)}\leq4\frac{\inr_2(K)}{\cir(\Simp)}.
\]

Let $q_1,q_2,q_3\in K$ be s.t.~$q_i|p^{\bot}=p_i$, $i=1,2,3$, and let $\Simp':=\conv(\{q^1,q^2,q^3\})$.
Lemma \ref{l:tildeinr} implies $\inr(\Simp;p^{\bot})\leq \inr(\Simp';\aff(\Simp'))$, and
since $\Simp'\subset K$, then $\inr(\Simp;p^{\bot})\leq \inr(K\cap\aff(\Simp');\aff(\Simp'))\leq \inr_2(K)$.

Moreover, the function $\sqrt{x+1+\sqrt{1-2x}}$ is decreasing
in $x\in[0,1/2]$, which is the range of possible values of $\inr(\Simp;p^{\bot})/\cir(\Simp)$.
Hence, we obtain that
\[
\sqrt{2}\sqrt{\frac{\inr_2(K)}{\cir(\Simp)}+1+\sqrt{1-2\frac{\inr_2(K)}{\cir(\Simp)}}}\leq
4\frac{\inr_2(K)}{\cir(\Simp)}.
\]
Solving this in $\inr_2(K)/\cir(\Simp)$, is a nasty polynomial of degree four. Using some
Algebraic tool, we can get that
\[
\frac{\inr_2(K)}{\cir(\Simp)}\gtrapprox 0.46498.
\]
The inverse of this number $0.46498$ is exactly the mysterious Perel'man number $2.15063$.
Since $\cir_2(K)\leq\cir(K|p^{\bot})=\cir(\Simp)$, we conclude $\cir_2(K)/\inr_2(K)\leq 2.151$.
\end{proof}

\begin{remark}
The proof of Proposition \ref{prop:perelman} shows that it would be desirable to
extend inequality \eqref{eq:Santalo} to higher dimensions. It may not only improve
the best known bounds of \eqref{eq:PerPukh}, but would also complete
the corresponding Blaschke-Santal\'o diagram for
the functionals $\inr,\D,\cir$ in $\R^n$ (c.f.~\cite{BrGo,HCS,Sa}).
\end{remark}

\section{Perel'man-Pukhov quotient in Minkowski spaces}\label{s:5}

Let us denote by $(\R^n,||\,\cdot\,||)$ an $n$-dimensional Minkowski space,
and its unit ball by $\Ball=\{x\in\R^n:||x||\leq 1\}$.
We denote by $\cir_i(K,\Ball)$ the smallest $\rho\geq 0$
s.t.~$K\subset x+\rho(\Ball+L)$, for some $L\in\L^n_{n-i}$, $x\in\R^n$ and $1\leq i\leq n$. Analogously, we denote
by $\inr_i(K,\Ball)$ the biggest $\rho\geq 0$ s.t.~$x+\rho(\Ball\cap L)\subset K$, for some $L\in\L^n_i$, $x\in \R^n$ and $1\leq i\leq n$.
Both functionals are increasing and homogeneous of degree $1$
in the first entry, whereas they are decreasing and homogeneous of degree $-1$ in the second one.
They extend the inner and outer radii in the Euclidean setting, i.e.,
$\cir_i(K,\Ball_2)=\cir_i(K)$ and $\inr_i(K,\Ball_2)=\inr_i(K)$, $1\leq i\leq n$.

\eqref{eq:Jung} and \eqref{eq:Steinhagen} have their counterparts in Minkowski
spaces, and they state that
\[
\frac{\cir_n(K,\Ball)}{\inr_1(K,\Ball)}\leq\frac{2n}{n+1}\quad\text{and}\quad\frac{\cir_1(K,\Ball)}{\inr_n(K,\Ball)}\leq \frac{n+1}{2},
\]
and are known as Bohnenblust \cite{Bo} and Leichtweiss \cite{Le} inequality, respectively.

John's theorem \cite{Jo} states that for any $K\in\K^n$ we have that
$\E\subset x+K\subset n\E$, for some $x\in\R^n$, where $\E$ is the ellipsoid
of maximum volume contained in $x+K$, called John's ellipsoid. Moreover, if $K\in\K^n_0$,
we can replace the value $n$ by $\sqrt{n}$ and assume that $x=0$.
We say that $K$ is in John's position if $\Ball_2$ is the John's ellipsoid of $K$.

We always assume that an ellipsoid $\E$ is centered in the origin, i.e.,
$\E=f(\Ball_2)$, for some non-singular linear application $f$.
In \cite{H91} it was shown that for any ellipsoid $\E\in\K^n$, we have that
$\cir_{n-i+1}(\E)=\inr_i(\E)$, for every $1\leq i\leq n$.

\begin{lemma}\label{l:affine}
Let $\Ball_j\in\K^n_0$, $j=1,2$, and let $f$ be a non-singular linear application.
Then $\cir_i(\Ball_1,\Ball_2)=\cir_i(f(\Ball_1),f(\Ball_2))$ and
$\inr_i(\Ball_1,\Ball_2)=\inr_i(f(\Ball_1),f(\Ball_2))$, $1\leq i\leq n$.
\end{lemma}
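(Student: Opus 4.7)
The plan is to exploit the fact that a non-singular linear map $f$ commutes with every set operation appearing in the definitions of $\cir_i(\,\cdot\,,\,\cdot\,)$ and $\inr_i(\,\cdot\,,\,\cdot\,)$. First I would record the elementary identities: for any subsets $A,B\subset\R^n$, subspace $L\in\L^n_j$, vector $x\in\R^n$ and scalar $\rho\geq 0$, non-singularity and linearity of $f$ give $f(A+B)=f(A)+f(B)$, $f(A\cap B)=f(A)\cap f(B)$, $f(\rho A)=\rho f(A)$, $f(x+A)=f(x)+f(A)$, and $f(L)\in\L^n_j$. In short, any inclusion built from the data $(\Ball_1,\Ball_2,L,x,\rho)$ transports verbatim, via $f$, to the analogous inclusion built from $(f(\Ball_1),f(\Ball_2),f(L),f(x),\rho)$.

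For the outer radii the argument then reduces to two lines. If $\Ball_1\subset x+\rho(\Ball_2+L)$ for some $L\in\L^n_{n-i}$ and $x\in\R^n$, applying $f$ yields $f(\Ball_1)\subset f(x)+\rho(f(\Ball_2)+f(L))$ with $f(L)\in\L^n_{n-i}$; hence $\cir_i(f(\Ball_1),f(\Ball_2))\leq\rho$, and taking the infimum over admissible $\rho$ gives $\cir_i(f(\Ball_1),f(\Ball_2))\leq\cir_i(\Ball_1,\Ball_2)$. The reverse inequality follows by repeating the same reasoning with $f^{-1}$ (which is again linear and non-singular) applied to the pair $(f(\Ball_1),f(\Ball_2))$. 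For the inner radii I would run the dual argument: start from $x+\rho(\Ball_2\cap L)\subset\Ball_1$ with $L\in\L^n_i$, push forward to $f(x)+\rho(f(\Ball_2)\cap f(L))\subset f(\Ball_1)$, and take the supremum over admissible $\rho$, again using $f^{-1}$ to close the loop.

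There is no genuine obstacle; the only points requiring a moment's care are that central symmetry and membership in $\K^n_0$ are preserved by linear maps (so that $f(\Ball_j)$ is still a legitimate ``unit ball''), and that non-singularity of $f$ is what makes $f$ a bijection on $\L^n_j$ for each $j$ and what guarantees $f(A\cap B)=f(A)\cap f(B)$. Thus the lemma is essentially a bookkeeping statement that both quotients are affine-linearly invariant in the pair $(\Ball_1,\Ball_2)$.
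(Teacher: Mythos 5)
Your proof is correct and follows essentially the same route as the paper: the paper's proof consists precisely of observing that the defining inclusions $\Ball_1\subset x+\rho(\Ball_2+L)$ and $x+\rho(\Ball_2\cap L)\subset\Ball_1$ transport, via the non-singular $f$, to the corresponding inclusions for $(f(\Ball_1),f(\Ball_2),f(L),f(x))$, and conversely via $f^{-1}$. Your write-up is in fact slightly more careful than the paper's, spelling out the intersection/sum identities and the bijection on $\L^n_j$ that make the transport work.
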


\begin{proof}
We have that
\[
\Ball_1\subset \rho\Ball_2+L\quad\text{if and only if}\quad f(\Ball_1)\subset \rho f(\Ball_2)+f(L),
\]
as well as
\[
\rho\Ball_1\cap L\subset\Ball_2\quad\text{if and only if}\quad \rho f(\Ball_1)\cap f(L)\subset f(\Ball 2),
\]
for every $\rho\geq 0$, $f$ linear function and $L\in\L^n_i$, $1\leq i\leq n$.
From this it immediately follows the lemma.
\end{proof}

\begin{theorem}\label{th:Minkowskispace}
Let $K\in\K^n$ in a Minkowski space $(\R^n,||\,\cdot\,||)$ of unit ball $\Ball$. Then
\[
\frac{\cir_{n-i+1}(K,\Ball)}{\inr_i(K,\Ball)}\leq n\sqrt{n},\quad 1\leq i\leq n.
\]
If $\Ball=\Ball_2$ or $K\in\K^n_0$, the bound becomes $n$.
Moreover, if both occur, the bound further reduces to $\sqrt{n}$.
\end{theorem}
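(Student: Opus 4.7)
The plan is to combine John's theorem, applied twice --- once to $\Ball$ and once to $K$ --- with the observation (recalled just before Lemma \ref{l:affine}) that any ellipsoid $\E$ satisfies $\cir_{n-i+1}(\E)=\inr_i(\E)$.

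\textbf{First reduction (the ball).} Because $\Ball$ is $0$-symmetric, John's theorem yields an origin-centered ellipsoid $\E$ with $\E\subset\Ball\subset\sqrt{n}\,\E$. Let $f$ be a non-singular linear map sending $\E$ to $\Ball_2$. By Lemma \ref{l:affine}, whose proof does not actually use $0$-symmetry of the first entry, both radii are preserved under the simultaneous substitution $(K,\Ball)\mapsto(f(K),f(\Ball))$, and this substitution preserves the symmetry class of $K$ when relevant. We may thus assume $\Ball_2\subset\Ball\subset\sqrt{n}\,\Ball_2$. Using that $\cir_{n-i+1}$ and $\inr_i$ are both monotone decreasing and $(-1)$-homogeneous in the ball argument, we obtain
\[
\cir_{n-i+1}(K,\Ball)\leq\cir_{n-i+1}(K,\Ball_2)=\cir_{n-i+1}(K),\qquad \inr_i(K,\Ball)\geq\inr_i(K,\sqrt{n}\,\Ball_2)=\frac{1}{\sqrt{n}}\,\inr_i(K),
\]
so that $\cir_{n-i+1}(K,\Ball)/\inr_i(K,\Ball)\leq\sqrt{n}\,\cir_{n-i+1}(K)/\inr_i(K)$. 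When $\Ball=\Ball_2$ this step is trivial and the factor $\sqrt{n}$ drops out.

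\textbf{Second reduction (the body).} By John's theorem applied to $K$, after a suitable translation there is an origin-centered ellipsoid $\E'$ with $\E'\subset K\subset\alpha\,\E'$, where $\alpha=n$ in general and $\alpha=\sqrt{n}$ when $K\in\K^n_0$. Since $\cir_{n-i+1}$ and $\inr_i$ are translation-invariant and monotone in the body argument, and since $\cir_{n-i+1}(\E')=\inr_i(\E')$ by \cite{H91}, we conclude
\[
\cir_{n-i+1}(K)\leq\alpha\,\cir_{n-i+1}(\E')=\alpha\,\inr_i(\E')\leq\alpha\,\inr_i(K),
\]
hence $\cir_{n-i+1}(K)/\inr_i(K)\leq\alpha$.

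Concatenating the two estimates produces $n\sqrt{n}$ in the most general situation, $n$ whenever exactly one of ``$\Ball=\Ball_2$'' or ``$K\in\K^n_0$'' holds, and $\sqrt{n}$ when both occur, matching the statement. No deep difficulty is anticipated; the only delicate point is confirming the affine-invariance of Lemma \ref{l:affine} in the regime of a non-symmetric first argument, for which the existing proof transfers essentially verbatim --- everything else is routine monotonicity and the key Euclidean identity $\cir_{n-i+1}(\E')=\inr_i(\E')$ for ellipsoids.
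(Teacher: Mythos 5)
Your proof is correct and uses essentially the same ingredients as the paper's --- John's theorem applied to both $K$ and $\Ball$, the affine-invariance Lemma \ref{l:affine}, and the ellipsoid identity $\cir_{n-i+1}(\E)=\inr_i(\E)$ --- differing only in the order of the two reductions. The single point of deviation is that you invoke Lemma \ref{l:affine} for the pair $(K,\Ball)$ with $K$ possibly non-symmetric, whereas the paper applies it only to the two John ellipsoids (which are automatically $0$-symmetric, so the lemma's stated hypotheses are met); your observation that the lemma's proof never uses symmetry of the first argument is correct, so this is fine.
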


\begin{proof}
After suitable translations of $K$ and $\Ball$, let $\E_K$ and $\E_\Ball$
be the ellipsoids of John of $K$ and $\Ball$, respectively. We therefore have that
$\E_K\subset K\subset \rho_K\E_K$ and $\E_\Ball\subset \Ball\subset \rho_\Ball\E_\Ball$,
where $\rho_K$ is either $n$, or $\sqrt{n}$ if $K\in\K^n_0$, whereas
$\rho_\Ball$ is either $\sqrt{n}$, or $1$ if $\Ball=\Ball_2$. Then
\[
\frac{\cir_{n-i+1}(K,\Ball)}{\inr_i(K,\Ball)}\leq
\rho_K\rho_\Ball\frac{\cir_{n-i+1}(\E_K,\E_\Ball)}{\inr_i(\E_K,\E_\Ball)}.
\]

Let $f$ be a linear application s.t.~$f(\E_\Ball)=\Ball_2$. Lemma \ref{l:affine} implies that
\[
\frac{\cir_{n-i+1}(K,\Ball)}{\inr_i(K,\Ball)}\leq
\rho_K\rho_\Ball\frac{\cir_{n-i+1}(f(\E_K),\Ball_2)}{\inr_i(f(\E_K),\Ball_2)},
\]
and finally, since $f(\E_K)$ is an ellipsoid, then
$\cir_{n-i+1}(f(\E_K),\Ball_2)=\inr_i(f(\E_K),\Ball_2)$
from which we conclude the result.
\end{proof}

Theorem \ref{th:Minkowskispace} raises the question whether the
estimates are tight or not, and how far they are from being best possible.

\emph{Acknowledgement}. I would like to thank Ren\'e Brandenberg and
Mar\'ia A.~Hern\'andez Cifre for fruitful discussions
and comments on the subject, as well as many useful advices
and proofreadings during the writing of this paper.

I would also like to thank the anonymous referee for his useful comments and suggestions,
which improved the paper.

\end{document}